\documentclass[12pt,oneside]{article}
\usepackage{amsmath,amssymb,amsfonts,amsthm}
\usepackage[utf8]{inputenc}
\usepackage{xcolor}
\newcommand{\T}{{\cal T}}

\newcommand{\set}[1]{\left\{#1\right\}}

\newcommand{\To}{\longrightarrow}

\newcommand {\cp}{\mathfrak{X}(\pi^{-1}(T M))}

\newtheorem{thm}{Theorem}[section]

\newtheorem{lem}[thm]{Lemma}
\newtheorem{prop}[thm]{Proposition}
\newtheorem{defn}[thm]{Definition}

\newtheorem{rem}[thm]{Remark}

\numberwithin{equation}{section}

\begin{document}
\title{{{\bf Golden Finsler Geometry: Local Properties and Global Deformations}}}
\author{\bf{Ebtsam H. Taha$^1$\footnote{Corresponding author}, Bankteshwar Tiwari$^2$ and A. Soleiman$^3$}}
\date{}
\maketitle                     
\vspace{-1.0cm}
\begin{center}
{ $^{1}$ Department of Mathematics, Faculty of Science, Cairo University, Giza,  Egypt
\vspace{0.2cm}
\\ $^{2}$ Centre for Interdisciplinary Mathematical Sciences, Institute of Science, Banaras Hindu University, Varanasi-221005, India}
\vspace{0.2cm}
\\$^{3}$ Department of Mathematics, Faculty of Science, Jouf University,  Skaka,  Kingdom of Saudi Arabia 
\end{center}
\vspace{-0.5cm}

\begin{center}
E-mails:$^{1}$ ebtsam@sci.cu.edu.eg, ebtsam.h.taha@hotmail.com \\
{\hspace{1.8cm}}
$^{2}$ btiwari@bhu.ac.in, banktesht@gmail.com\\
{\hspace{1.8cm}}$^{3}$ asoliman@ju.edu.sa,  amrsoleiman@yahoo.com\\
\end{center}

\vspace{0.7cm} \maketitle
\smallskip

 \noindent{\bf Abstract.} We introduce the concept of a golden Finsler structure on a finite-dimensional smooth manifold $M$ and investigate it from both local (coordinate-based) and global (coordinate-free) perspectives. Locally, we explicitly compute the fundamental metric tensor, establish the positive definiteness condition, and derive the geodesic spray coefficients. Furthermore, we investigate the projective flatness of the golden $(\alpha, \beta)$-metric and prove the non-existence of almost rational golden $(\alpha, \beta)$-metrics. Globally, we define the golden Finsler change $\widetilde{F}$ of a base Finsler metric $F$ and examine its geometric properties utilizing a special concurrent $\pi$-vector field. We explicitly determine how fundamental non-linear structures, including the Barthel and Berwald connections, transform under this change. Finally, we prove that $\widetilde{F}$ and $F$ cannot be projectively related.
 
\bigskip

\noindent{\bf Keywords:\/}\, Golden Finsler change;  Golden $(\alpha ,\beta)$-metric; Projectively flat;  Projectively equivalent; Almost-rational Finsler metric

\medskip
\noindent{\bf MSC 2020}: 53C60, 53B40, 58B20.


\section{Introduction and Motivation}
\par The golden ratio has been recognized since ancient times and is observed in the pentagonal symmetry present in different flowers. This unique proportion also appears in musical compositions, sound frequencies, and human body dimensions. It has played a significant role throughout history in architecture and the visual arts. Notably, the golden ratio and the golden rectangle, defined by the sides of this ratio, can be found in the harmonious proportions of temples, churches, statues, paintings, and photographs. These pervasive appearances set the context for its profound mathematical and geometric value.

\bigskip

Mathematically, the golden ratio partitions a line segment into a major and a minor subsegment so that the ratio of the whole segment to the major subsegment is equal to the ratio of the major subsegment to the minor subsegment. Both ratios equal the number $\varphi$, which is the positive real root of the equation $x^2-x-1=0$ (thus, $\varphi=\frac{\sqrt{5}+1}{2}$). 
Moreover, the golden ratio is evident throughout scientific literature, where it plays a significant role across different fields, including atomic physics, Newtonian and relativistic mechanics, and the study of time dilation and Lorentz contraction in special relativity. It also features in the topology of four-manifolds, conformal field theory, probability theory, and Cantorian spacetime (see \cite{gold ratio3} for more details). This breadth of application underlines the ratio's deep mathematical importance.

\bigskip

The concept of a golden structure on a class of Riemannian manifolds was studied in \cite{gold ratio1, gold ratio2, gold ratio3}. More precisely, a $(1,1)$-tensor field $P$ on an $n$-dimensional Riemannian manifold $(M, g)$ is called a golden structure if it satisfies the equation $P^{2}=P+I$ (which mathematically mirrors the polynomial satisfied by the golden ratio $\varphi$), where $I$ stands for the identity tensor field. A Riemannian manifold endowed with such a structure is called a golden Riemannian manifold. 

\bigskip
\par
A Finsler structure on a smooth manifold extends the Riemannian metric in that its restriction to each tangent space is a norm rather than an inner product. For a Riemannian manifold $(M, \alpha)$, $\alpha=\sqrt{\alpha_{ij}(x)y^iy^j}$ and a 1-form $\beta=b_i(x)y^i$ on $M$, the $(\alpha, \beta)$-metrics $F:= \alpha \, \phi(s) := \alpha \,\phi \left(\frac{\beta}{\alpha}\right)$ provide an essential class of Finsler spaces (see, for example, \cite{Erasmo,  Projectively Flat Special, shenGCY, Soleiman-Taha_Mat, Tiwari-Kumar-Tayebi, [7]}).

\bigskip

In this paper, we first investigate the local coordinate-based properties of what we define as the \textit{golden $(\alpha , \beta )$-metric}, given by $F= \alpha \, \phi (s) := \alpha \, (s^2-s-1)$. We explicitly determine the coefficients of the Finsler metric tensor and its inverse, establish the positive definiteness condition, and derive the geodesic spray coefficients. Consequently, we investigate the projective flatness of this metric and prove the non-existence of almost rational golden $(\alpha, \beta)$-metrics. 

To extend our investigation globally, we utilize the pullback formalism of global Finsler geometry \cite{amr3, r86, [13]} to conduct a coordinate-free study. In the standard literature of $(\alpha, \beta)$-metrics, $\beta$ typically denotes the 1-form on $M$; however, because $\beta$ is already reserved for a different foundational concept in the pullback formalism, we adopt $\mathfrak{B}$ as our globally defined 1-form. 

By a \textit{golden Finsler structure} on a base Finsler manifold $(M,F)$, we mean the global transformation of the metric (not necessarily Riemannian) given by $\widetilde{F}=F\,\phi(s)$, with $s:=\frac {\mathfrak{B}} {F}$ and $\phi(s):=s^2-s-1$. The function $\widetilde{F}$ yields a conic-Finsler metric under certain conditions. Naturally, if the base Finsler metric $F$ is purely Riemannian (i.e., $F=\alpha$), this global geometric deformation identically recovers the local golden $(\alpha, \beta)$-metric. We explicitly derive the geometric objects associated with $\widetilde{F}$, including the fundamental metric and Cartan tensors.

\medskip

To make the global geometric derivations tractable and fully compute the geodesic spray and Berwald curvature of $\widetilde{F}$, we restrict our global study to a Finsler manifold $(M,F)$ that admits a concurrent $\pi$-vector field $\overline{A}$. We compute the corresponding $\pi$-form $\mathbf{B}:=i_{\overline{A}}\, g$, where $g$ is the metric tensor of $F$. Furthermore, the exact transformation of the curvature tensor field of the Barthel connection under this golden change is established. Consequently, we showed that the Finsler metrics $\widetilde{F}$ and $F$ cannot be projectively related. 

\medskip
This paper consists of four sections following the introduction. In \S 2, we recall the fundamental concepts of local Finsler geometry and the $(\alpha, \beta)$-metrics. Then,  we define the golden $(\alpha, \beta)$-metric and  derive its fundamental metric tensor, and geodesic spray coefficients. This enables us to establish a conditions for the projective flatness, and prove the non-existence of almost rational golden $(\alpha, \beta)$-metrics. In \S 3, we move to a global (coordinate-free) framework to introduce the golden Finsler change $\widetilde{F}$ of a Finsler metric $F$. We derive the coordinate-free expressions for different associated geometric objects such as the deformed fundamental metric and Cartan tensors, and establish the exact algebraic conditions required for non-degeneracy. Also, we compute the canonical geodesic spray of $\widetilde{F}$, determine the Barthel and Berwald connections, and conclusively prove that the base Finsler metric $F$ and its golden counterpart $\widetilde{F}$ are never projectively equivalent. The $\pi$-vector field $\overline{A}$ remains concurrent with respect to $\widetilde{F}$ exactly when the differential condition in equation \eqref{preserved concurrent} holds. Finally, we end the paper with a conclusion.
\section{Golden $(\alpha , \beta)$-metric: Local study}

In this section, we first explicitly compute the fundamental metric tensor and spray coefficients of the golden $(\alpha , \beta)$-metric. Next, we study the condition for its projective flatness. We end this section by examining the existence of an almost rational golden $(\alpha , \beta)$-metric on a smooth manifold.\\
\par Matsumoto introduced $(\alpha, \beta)$-metrics in \cite{MM} as follows: a Finsler metric $F=\alpha\, \phi \left(\frac{\beta}{\alpha}\right)=\alpha\, \phi(s)$  is called an $(\alpha, \beta)$-metric if $\phi$ is a smooth positive function defined on the open interval $(-a_{o}, a_{o})$ such that 
\begin{equation}\label{postive def. cond.}
\phi(s)- \phi(s)\, \phi'(s) +(b^2 -s^2 ) \,\phi''(s) > 0,\,\,\,\,\, |s| \leq \, b:=||\beta||_{\alpha} < a_{o}. 
\end{equation}
\begin{defn}
The golden $(\alpha , \beta)$-metric is an $(\alpha , \beta)$-metric $F = \alpha \phi (s)$ with $$\phi(s):=s^2-s-1, \quad s=\frac{\beta}{\alpha}.$$ 
\end{defn}
\begin{lem}
A golden $(\alpha, \beta)$-metric $F=\alpha (s^2 - s - 1)$ with $b > 1/\sqrt{2}$ is  a Finsler metric on the open subset  of the slit tangent bundle $\T M$ which is the domain of $s$:
$$\mathcal{D} = \left(-k, \frac{1-\sqrt{5}}{2}\right) \cup \left(\frac{1+\sqrt{5}}{2}, k\right),\quad k = \sqrt{\frac{2b^2-1}{3}}.$$
\end{lem}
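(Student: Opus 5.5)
The plan is to check the two pointwise conditions that make $F=\alpha\,\phi(\beta/\alpha)$ a (conic) Finsler metric, namely positivity of $\phi$ and the strong convexity inequality, for $\phi(s)=s^2-s-1$. The domain $\mathcal{D}$ should then come out as the intersection of the two solution sets.

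\emph{Positivity of $\phi$.} The roots of $s^2-s-1=0$ are $\frac{1\pm\sqrt5}{2}$. Hence $\phi(s)>0$ exactly when
$$s<\frac{1-\sqrt5}{2}\quad\text{or}\quad s>\frac{1+\sqrt5}{2}.$$
Since $\phi$ is a polynomial, it is smooth on all of $\mathbb{R}$, so smoothness imposes no further restriction.

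\emph{Strong convexity.} Here I would use the convexity condition in the standard form of Chern--Shen: $\phi(s)-s\,\phi'(s)+(b^2-s^2)\,\phi''(s)>0$. I read \eqref{postive def. cond.}, whose middle term is printed as $\phi\,\phi'$, as intending $s\,\phi'$. With $\phi'(s)=2s-1$ and $\phi''(s)=2$ one gets
$$\phi-s\phi'=-s^2-1,\qquad \phi-s\phi'+(b^2-s^2)\phi''=2b^2-1-3s^2 .$$
This is positive iff $s^2<\frac{2b^2-1}{3}=k^2$. For this interval to be nonempty we need $2b^2-1>0$, which is exactly the hypothesis $b>1/\sqrt2$, and then the condition reads $|s|<k$.

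\emph{Intersection.} Intersecting $(-k,k)$ with the positivity set gives precisely the set $\mathcal{D}$ of the statement. Because $\mathcal{D}$ is defined by strict inequalities in the continuous function $s=\beta/\alpha$, its preimage in $\T M$ is open. On that preimage, $F$ is smooth, positive and positively $1$-homogeneous. For the fundamental tensor I would invoke the known determinant formula
$$\det(g_{ij})=\phi^{n+1}(\phi-s\phi')^{n-2}\bigl(\phi-s\phi'+(b^2-s^2)\phi''\bigr)\det(a_{ij})$$
together with the standard continuity argument deforming $\beta$ to $0$. This should give that $g_{ij}$ is positive definite wherever the two conditions hold.

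\emph{Main obstacle.} The weak point is the last step. Here $\phi-s\phi'=-(s^2+1)<0$, so the factor $(\phi-s\phi')^{n-2}$ has sign $(-1)^{n-2}$, and Shen's criterion is normally stated with $\phi$ defined on a whole symmetric interval containing $0$. Neither is true here. So I would not rely on the global lemma, and would instead verify positive definiteness directly using the eigenvalue description of
$$g_{ij}=\rho\,a_{ij}+\rho_0\,b_ib_j+\rho_1(b_i\alpha_j+b_j\alpha_i)+\rho_2\,\alpha_i\alpha_j,\qquad \rho=\phi(\phi-s\phi')$$
on the two-plane spanned by $y$ and $b^\sharp$ and on its $a$-orthogonal complement. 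The computation $\rho=-\phi(s^2+1)<0$ on $\mathcal{D}$ suggests that $g$ fails to be positive definite on that complement. In that case the lemma can only hold in a weaker sense, for instance under the paper's own literal condition \eqref{postive def. cond.}, and I would need to flag this. A further gap is nonemptiness of $\mathcal{D}$: the second component requires $k>\frac{1+\sqrt5}{2}$, which needs a larger $b$ than $1/\sqrt2$. I would state this as a side condition, noting that $|s|\le b$ is automatic by Cauchy--Schwarz.
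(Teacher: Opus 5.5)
Your first three steps are exactly the paper's proof. It checks $\phi>0$ and $\phi-s\phi'+(b^2-s^2)\phi''=2b^2-1-3s^2>0$, intersects the two sets, and stops. (It does use $s\phi'$, so the $\phi\phi'$ in \eqref{postive def. cond.} is only a typo.)

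The obstacle you flag is not a weakness of your plan. It is a genuine error in the lemma and in the paper's argument. The Chern--Shen criterion concerns $\phi$ positive on an interval $(-b_0,b_0)$ and the inequality for \emph{all} $b$ with $|s|\le b<b_0$. The case $b=|s|$ forces $\phi-s\phi'>0$, and that is exactly what fails here, since $\phi-s\phi'=-(1+s^2)$. Checking the inequality at the single given $b$, on a set of $s$-values avoiding $0$, controls only the plane $\mathrm{span}\{y,b^\sharp\}$ and says nothing about the remaining $n-2$ directions.

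So do not write ``suggests'': your computation is already a proof. For $n\ge 3$ pick $v\neq 0$ with $a(v,y)=0$ and $b_iv^i=0$. All terms of $g_{ij}$ except the first vanish on $(v,v)$, hence
\[
g_y(v,v)=\phi(\phi-s\phi')\,a(v,v)=(1+s+s^3-s^4)\,a(v,v)=-(1+s^2)(s^2-s-1)\,a(v,v)<0
\]
whenever $s\in\mathcal{D}$. The coefficient is the paper's own $\rho_0$.

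Concretely, take $a$ Euclidean on $\mathbb{R}^3$, $\beta=3y^1$, $y=(2,\sqrt5,0)$ (so $s=2\in\mathcal{D}$) and $v=(0,0,1)$. Then $g_y(v,v)=-5$.

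This complement is $g_y$-orthogonal to $\mathrm{span}\{y,b^\sharp\}$. On that plane $g_y(y,y)=F^2>0$, and its determinant relative to $a$ is $\phi^3(2b^2-1-3s^2)>0$ (divide the full determinant by $\rho_0^{\,n-2}$). Hence on $\mathcal{D}$ the tensor $g_y$ is nondegenerate of signature $(2,n-2)$.

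The lemma is therefore true for $n=2$, by this determinant argument rather than by deforming $\beta$ to $0$, which would pass through $\phi\le 0$. It is false for every $n\ge 3$. The paper's own formula \eqref{determinant of golden}, with its factor $(-1)^{n-2}$, already gives $\det g<0$ on $\mathcal{D}$ for odd $n$.

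The only weaker sense that survives is nondegeneracy (the ``maximal rank'' of the paper's Section~3 definition), i.e.\ a pseudo-Finsler metric. Drop the appeal to the literal form of \eqref{postive def. cond.}: no choice of criterion changes the sign of $g_y(v,v)$.

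Your nonemptiness remark is correct and bites even earlier than you say. The first component of $\mathcal{D}$ is empty unless $k>\frac{\sqrt5-1}{2}$, i.e.\ $b^2>\frac{11-3\sqrt5}{4}$.

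If you want a positive-definite ``golden'' metric, note that $g$ is unchanged under $F\mapsto -F$. The metric $\alpha(1+s-s^2)$ satisfies the Chern--Shen hypotheses for all $b<\frac{\sqrt5-1}{2}$.
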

\begin{proof}
In view of \eqref{postive def. cond.},  an $(\alpha, \beta)$-metric is a Finsler metric if $\phi(s) > 0$, and the strong convexity condition $\phi(s) - s\phi'(s) + (b^2 - s^2)\phi''(s) > 0$. 

For the golden $(\alpha, \beta)$-metric,  $\phi(s) = s^2 - s - 1 > 0$ holds when $s < \frac{1-\sqrt{5}}{2}$ or $s > \frac{1+\sqrt{5}}{2}$. The strong convexity simplifies to $2b^2 - 3s^2 - 1 > 0$, or equivalently,  $s^2 < \frac{2b^2-1}{3}$. By defining $k = \sqrt{\frac{2b^2-1}{3}}$, this condition is satisfied whenever $s \in (-k, k)$.  For $k \in \mathbb{R}$,   we require $2b^2 - 1 > 0$, which is satisfied by forcing $b > 1/\sqrt{2}$. Therefore,  $$s \in \mathcal{D}=\left(-k, \frac{1-\sqrt{5}}{2}\right) \cup \left(\frac{1+\sqrt{5}}{2}, k\right).$$
Consequently, on $\mathcal{D}$ the golden $(\alpha, \beta)$-metric fulfils all requirements for being a  Finsler metric. Thus, while the golden $(\alpha, \beta)$-metric is not regular globally for all $|s| < b$, it is locally regular within this precisely defined domain $\mathcal{D}$.
\end{proof}
\subsection{Fundamental metric tensor and  spray coefficients} 
 It is known that the fundamental Finsler metric tensor of an $(\alpha , \beta)$-metric is given by (see, e.g. \cite{[7]})
   \begin{equation} \label{gen metric}
   g_{ij}=\rho_{0}\, \alpha_{ij} + \rho_{1}\, b_{i}\, b_{j}+ \frac{\rho_{2}}{\alpha} \, \left(b_{i} y_{j} + b_{j} y_{i} -  \frac{s}{\alpha} y_{i} y_{j}\right),
\end{equation}   
 where $y_{j}:= \alpha_{ij}\, y^{i} ,$
    $$\rho_{0}:=  \left(\phi^2 -s \phi\, \phi' \right),\,\, \,\rho_{1}:= \left( \phi \, \phi'' + (\phi')^{2}\right), \,\,\, \rho_{2}:= \left(\phi\, \phi' -s\left[\phi\,\phi'' + (\phi')^{2} \right] \right).$$
The determinant of the fundamental Finsler metric tensor of an $(\alpha, \beta)$-metric is \begin{equation}\label{determinant}
\det (g_{ij})= \phi ^{n+1} \left(\phi -s  \phi' \right)^{n-2}  \left(\phi -s  \phi' + (b^2 -s^2 )\phi'' \right)\det (\alpha_{ij}).
\end{equation}
In addition, the inverse of the fundamental Finsler metric is given by
\begin{equation}\label{inverse}
g^{ij}=\rho_0 ^{-1}\, \left\{\alpha^{ij} + \eta\, b^{i}\, b^{j}+ \eta_{0}\,{\alpha}^{-1} \, \left(b^{i} y^{j} + b^{j} y^{i} \right) + \frac{\eta _{1}}{\alpha ^2} y^{i} y^{j} \right\},
\end{equation}
where 
\begin{eqnarray*}
\eta &=&  \frac{-2\phi ''}{2 \left(\left(b^2-s^2\right) \phi ''-s \phi '+\phi \right)},\\
\eta_{0} &=& \frac{- \phi  \phi '+s \left(\phi  \phi ''+\phi '^2\right)}{ \phi \left(\left(b^2-s^2\right) \phi ''-s \phi '+\phi \right)},\\
\eta_{1} &=& \frac{\left(\left(b^2-s^2\right) \phi '+s \phi \right) \left(\left(\phi -s \phi '\right) \phi '-s \phi \, \phi ''\right)}{ \phi^{2}  \left(\left(b^2-s^2\right) \phi ''-s \phi '+\phi \right)},
\end{eqnarray*}
and $\alpha^{ij}$ are the component of inverse matrix of $\alpha_{ij}$.
\begin{prop} The fundamental Finsler metric tensor, inverse of the fundamental Finsler metric tensor and the determinant of the fundamental Finsler metric tensor for the golden-$(\alpha, \beta)$ metric are given, respectively, by:
 \begin{eqnarray} \label{golden alpha,beta metric}
   g_{ij}&=&(1+s+s^3-s^4 )\, \alpha_{ij} + (6 s^2 -6s -1 )\, b_{i}\, b_{j} \\ \nonumber &&
   + \frac{(1+ 3 s^2 - 4 s^3 )}{\alpha} \, \left(b_{i} y_{j} + b_{j} y_{i} -  \frac{s}{\alpha} y_{i} y_{j}\right),
\end{eqnarray} 
\begin{eqnarray}
g^{ij} =\frac{1}{1+s+s^3-s^4}\left\{\alpha^{ij} + \eta\, b^{i}\, b^{j}+ \eta_{0}\,{\alpha}^{-1} \, \left(b^{i} y^{j} + b^{j} y^{i} \right) + \frac{\eta _{1}}{\alpha ^2} y^{i} y^{j} \right\}
\end{eqnarray}
\begin{equation}\label{determinant of golden}
\det (g_{ij})= (-1)^{n-2} \,(s^2-s-1)^{n+1} \left(s^2+1 \right)^{n-2}  \left(2b^2 -3s^2 -1 \right)\det (\alpha_{ij}).
\end{equation}
\end{prop}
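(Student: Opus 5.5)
The plan is to obtain all three formulas by direct substitution into the general $(\alpha,\beta)$-metric formulas \eqref{gen metric}, \eqref{inverse} and \eqref{determinant}, taking $\phi(s)=s^2-s-1$. I would first record the three derivatives and one auxiliary combination that occurs repeatedly:
\[
\phi'(s)=2s-1,\qquad \phi''(s)=2,\qquad \phi-s\phi'=s^2-s-1-2s^2+s=-(s^2+1).
\]
No other ingredients are needed. The computation is polynomial algebra in $s$; the quantities $b^2$ and $\alpha$ enter only as parameters.

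For the metric tensor I would compute the three coefficients $\rho_0,\rho_1,\rho_2$.
\begin{itemize}
\item Factor $\rho_0=\phi(\phi-s\phi')$. This gives $\rho_0=-(s^2-s-1)(s^2+1)$. Expanding, $(s^2-s-1)(s^2+1)=s^4-s^3-s-1$, so $\rho_0=1+s+s^3-s^4$.
\item Next, $\rho_1=2\phi+(2s-1)^2=6s^2-6s-1$.
\item Finally, $\rho_2=\phi\phi'-s\rho_1$. Here $\phi\phi'=(s^2-s-1)(2s-1)=2s^3-3s^2-s+1$. Subtracting $6s^3-6s^2-s$ gives $\rho_2=1+3s^2-4s^3$.
\end{itemize}
Inserting these into \eqref{gen metric} yields \eqref{golden alpha,beta metric}.

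For the inverse I would note that \eqref{inverse} carries the global prefactor $\rho_0^{-1}$. By the step above, this equals $(1+s+s^3-s^4)^{-1}$, which gives the stated form of $g^{ij}$. The coefficients $\eta,\eta_0,\eta_1$ are kept as in the general formula, with $\phi,\phi',\phi''$ substituted. If one wants them explicitly, the common denominator is $(b^2-s^2)\phi''-s\phi'+\phi=2b^2-3s^2-1$. Then $\eta=-2/(2b^2-3s^2-1)$, and $\eta_0=-\rho_2/\big(\phi(2b^2-3s^2-1)\big)$. For $\eta_1$, the numerator factor $(\phi-s\phi')\phi'-s\phi\phi''$ simplifies to $-(s^2+1)(2s-1)-2s(s^2-s-1)$.

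For the determinant I would substitute into \eqref{determinant}. The middle factor becomes $(\phi-s\phi')^{n-2}=(-1)^{n-2}(s^2+1)^{n-2}$. The last factor becomes $-(s^2+1)+2(b^2-s^2)=2b^2-3s^2-1$. Together these give \eqref{determinant of golden}. This last factor is the same quantity that appeared in the strong convexity condition of the preceding Lemma, which serves as a consistency check. I do not expect a genuine obstacle; the only point requiring care is sign bookkeeping in $\rho_0$ and $\rho_2$, which I would verify by expanding the products term by term.
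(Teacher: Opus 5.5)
Your proposal is correct and follows the paper's own route: substitute $\phi'=2s-1$, $\phi''=2$ into the general $(\alpha,\beta)$-metric formulas for $g_{ij}$, $g^{ij}$ and $\det(g_{ij})$, which gives $\rho_0=1+s+s^3-s^4$, $\rho_1=6s^2-6s-1$, $\rho_2=1+3s^2-4s^3$ and the determinant factors $(-1)^{n-2}(s^2+1)^{n-2}$ and $2b^2-3s^2-1$. As a small aside, the factor $(\phi-s\phi')\phi'-s\phi\phi''$ in $\eta_1$ is exactly $\rho_2=-(s-1)(4s^2+s+1)$, which is why the paper's explicit $\eta_0$ and $\eta_1$ share that common factor.
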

\begin{proof}
    By direct calculations, $\phi'(s):=2s-1,\,\, \phi''(s):=2$, which implies
$$\rho_{0}= 1+s+s^3-s^4 ,\,\quad \rho_{1}=6 s^2 -6s -1 ,\,\quad \rho_{2}= 1+ 3 s^2 - 4 s^3.$$ Therefore, we obtain \eqref{golden alpha,beta metric}. Consequently,  the determinant of the  fundamental Finsler metric tensor of the golden-$(\alpha, \beta)$ metric is given by \eqref{determinant of golden}. 
Since
\begin{eqnarray*}
\eta (s) &=&  \frac{-2}{2b^2 -3s^2 -1},\\
\eta_{0} (s) &=&  -\frac{(s-1) \left(4 s^2+s+1\right)}{ ((s-1) s-1) \left(-2 b^2+3 s^2+1\right)},\\
\eta_{1}(s)&=& -\frac{(s-1) \left(4 s^2+s+1\right) \left(b^2 (1-2 s)+s^3+s\right)}{((s-1) s-1)^2 \left(-2 b^2+3 s^2+1\right)}.
\end{eqnarray*}
Now, plugging the expressions of $\eta,\,\eta_{0},\,\eta_{1} $ into \eqref{inverse}, we obtain the following
\begin{eqnarray*}
g^{ij}&=&\rho_0 ^{-1}\, \left\{\alpha^{ij} - \frac{2}{2b^2 -3s^2 -1}\, b^{i}\, b^{j}-\frac{(s-1) \left(4 s^2+s+1\right)}{\alpha ((s-1) s-1) \left(-2 b^2+3 s^2+1\right)} \, \left(b^{i} y^{j} + b^{j} y^{i} \right)\right.\\
&& \qquad \quad \left.-\frac{1}{\alpha ^2} \frac{(s-1) \left(4 s^2+s+1\right) \left(b^2 (1-2 s)+s^3+s\right)}{((s-1) s-1)^2 \left(-2 b^2+3 s^2+1\right)}y^{i} y^{j} \right\}.
\end{eqnarray*}
Let $z(s):= 2b^2 -3s^2 -1$. Thus, we get
\begin{eqnarray*}
g^{ij}&=&\rho_0 ^{-1}\, \left\{\alpha^{ij} - \frac{2}{z(s)}\, b^{i}\, b^{j}-\frac{(s-1) \left(4 s^2+s+1\right)}{\alpha ((s-1) s-1) z(s)} \, \left(b^{i} y^{j} + b^{j} y^{i} \right)\right.\\
&& \qquad \quad \left.+\frac{1}{\alpha ^2} \frac{(s-1) \left(4 s^2+s+1\right) \left(b^2 (1-2 s)+s^3+s\right)}{((s-1) s-1)^2 z(s)}y^{i} y^{j} \right\}.
\end{eqnarray*}
\end{proof}
\begin{rem}
As  $s,\,\phi (s) \neq 0$ and $(s^2+1) \neq 0$, then
the golden-$(\alpha, \beta)$ metric is non-degenerate if and only if $$z(s):=(-1+2b^2 -3s^2) \neq 0.$$
In addition, the positive definite condition for the golden-$(\alpha, \beta)$ metric is 
$$ z(s)> 0.$$
For $\alpha \neq 0$,  it  is equivalent to, $$ \alpha ^2 (-1+2b^2) - 3 \beta ^2 >0. $$
\end{rem}
\begin{prop} The spray coefficients $G^i$ of the golden $(\alpha , \beta)$-metric are given by (\ref{spray1}), where the scalars $Q(s),\Theta(s)$, and $\Psi(s)$ are given by (\ref{spray2}), (\ref{spray3}) and (\ref{spray4}) respectively. 
\end{prop}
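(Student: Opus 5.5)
The plan is to specialize the known general formula for the spray coefficients of an $(\alpha,\beta)$-metric (see, e.g., \cite{[7]}, \cite{shenGCY}) to $\phi(s)=s^2-s-1$. For $F=\alpha\,\phi(s)$ that formula reads
\[
G^i = G^i_\alpha + \alpha Q\, s^i{}_0 + \Theta\,\{-2Q\alpha s_0 + r_{00}\}\frac{y^i}{\alpha} + \Psi\,\{-2Q\alpha s_0 + r_{00}\}\, b^i ,
\]
where $G^i_\alpha$ are the spray coefficients of $\alpha$, and $r_{ij}$, $s_{ij}$, $s^i{}_j$, $s_j$ are the usual symmetric and antisymmetric parts of $b_{i|j}$ (covariant derivative with respect to $\alpha$), contracted with $y$ where indicated. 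The coefficients are
\[
Q=\frac{\phi'}{\phi-s\phi'},\qquad
\Theta=\frac{\phi\phi'-s(\phi\phi''+\phi'\phi')}{2\phi\left(\phi-s\phi'+(b^2-s^2)\phi''\right)},\qquad
\Psi=\frac{\phi''}{2\left(\phi-s\phi'+(b^2-s^2)\phi''\right)} .
\]
This expression will be what (\ref{spray1}) denotes. Since this is a standard result, I will cite it rather than rederive it from the geodesic equations.

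The key steps, in order, are as follows.
\begin{enumerate}
\item Use the derivatives already computed in the previous proposition, $\phi'=2s-1$ and $\phi''=2$.
\item Compute the common denominators. First, $\phi-s\phi'=s^2-s-1-2s^2+s=-(s^2+1)$. Second, $\phi-s\phi'+(b^2-s^2)\phi''=2b^2-3s^2-1=z(s)$, in agreement with the Remark.
\item Substitute to get $Q=-(2s-1)/(s^2+1)$, which gives (\ref{spray2}), and $\Psi=1/z(s)$, which gives (\ref{spray4}).
\item For $\Theta$, the numerator is $\phi\phi'-s(\phi\phi''+\phi'^2)=-\rho_2$. Hence $\Theta=-(1+3s^2-4s^3)/\bigl(2(s^2-s-1)z(s)\bigr)$, and I will write the numerator in the factored form $(s-1)(4s^2+s+1)$ used earlier, giving (\ref{spray3}).
\end{enumerate}

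The only real obstacle is bookkeeping: matching sign conventions and factorizations so that the displayed forms of $Q,\Theta,\Psi$ agree with those in (\ref{spray2})--(\ref{spray4}). I will check the identity $-\rho_2=(s-1)(4s^2+s+1)$ by expanding both sides.

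I will also note that every formula is valid on the domain $\mathcal{D}$ of the earlier Lemma. There $\phi\neq0$ and $z(s)>0$, and $s^2+1$ never vanishes, so none of the denominators is zero.
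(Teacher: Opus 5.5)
Your approach is the same as the paper's: cite the general $(\alpha,\beta)$-metric formula \eqref{spray1} and substitute $\phi'=2s-1$, $\phi''=2$. Steps 1--3 are correct. Step 4, however, has a sign error, so the argument as written yields the negative of \eqref{spray3}.

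With the paper's definition $\rho_2=\phi\phi'-s\left[\phi\phi''+(\phi')^2\right]$, the numerator of $\Theta$ is exactly $+\rho_2$, not $-\rho_2$. Indeed $\phi\phi'=2s^3-3s^2-s+1$ and $s(\phi\phi''+\phi'^2)=6s^3-6s^2-s$, so the numerator is $1+3s^2-4s^3=\rho_2$. Now combine your claim that the numerator equals $-\rho_2$ with the identity $-\rho_2=(s-1)(4s^2+s+1)$ that you plan to check (which is true). You get $\Theta=+\frac{(s-1)(4s^2+s+1)}{2(s^2-s-1)z(s)}$, which has the opposite sign from \eqref{spray3}. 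So your assertion that this ``gives \eqref{spray3}'' does not follow. A quick check at $s=2$ confirms it: there $\phi=1$, $\phi'=3$, $\phi''=2$, so the numerator is $3-2\cdot 11=-19$, whereas $(s-1)(4s^2+s+1)=19$.

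The fix is immediate:
\[
\Theta=\frac{\rho_2}{2\phi\, z(s)}=\frac{1+3s^2-4s^3}{2(s^2-s-1)z(s)}=-\frac{(s-1)(4s^2+s+1)}{2(s^2-s-1)z(s)},
\]
which is \eqref{spray3}. With that correction your proof coincides with the paper's. Your remark that the denominators $s^2+1$, $\phi$ and $z(s)$ are nonzero on $\mathcal{D}$ is a small addition that the paper leaves implicit.
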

\begin{proof} It is known that the spray coefficients of an $(\alpha , \beta)$-metric are given by (see, e.g.,  \cite{shenGCY})
\begin{equation}{\label{spray1}}
G^{i} = G^{i}_{\alpha} + \alpha \, Q \, s^{i}_{0} + (-2 \alpha \, Q \, s_{0} +r_{00})( \Psi \, b^{i} + \Theta \, \alpha ^{-1} \, y^{i}),
\end{equation}
where 
\begin{eqnarray*}
Q(s)&:= & \frac{\phi '(s)}{\phi (s)-s \phi '(s)}\\
 \Theta (s) &:= & \frac{\phi (s) \phi '(s)-s \left(\phi (s) \phi ''(s)+\phi '(s)^2\right)}{2 \phi (s) \left(\left(b^2-s^2\right) \phi ''(s)-s \phi '(s)+\phi (s)\right)}\\
 \Psi (s) &:= & \frac{\phi ''(s)}{2 \left(\left(b^2-s^2\right) \phi ''(s)-s \phi '(s)+\phi (s)\right)}.
\end{eqnarray*}
For a golden $(\alpha, \beta)$-metric, we have
\begin{eqnarray} \label{spray2}
Q(s)&= & \frac{1-2 s}{s^2+1}\\ \label{spray3}
 \Theta (s) &= &- \frac{(s-1) \left(4 s^2+s+1\right)}{2 ((s-1) s-1) z(s)}\\ \label{spray4}
 \Psi (s) &= & \frac{1}{z(s)}
\end{eqnarray}
It should be noted that $\eta (s) = -2 \Psi (s) ,\quad
\eta_{0} (s) = -2 \Theta (s).$ \end{proof}
\subsection{Projective flatness of the Golden $(\alpha, \beta)$-metric}
\begin{defn}
A Finsler manifold is said to be projectively flat if there exists a local coordinate system in which its geodesics are line segments. 
\end{defn}
It is an interesting problem in Finsler geometry to study and characterize projectively 
flat Finsler metrics. The following Lemma is well known:
\begin{lem}\emph{\cite{shenGCY}}
    An $(\alpha , \beta)$-metric $F=\alpha\, \phi(s)$ is projectively flat in an open subset $U$ of $\mathbb R^n$ if and only if
    \begin{equation}\label{lmm1}
        (\alpha_{mi}\alpha^2-y_my_i)G^m_{\alpha}+\alpha^3Qs_{i0}+\Psi\alpha(-2\alpha Qs_0+r_{00})(b_i\alpha-sy_i)=0.
    \end{equation}
    Here $r_{ij}=\frac{1}{2}(b_{i|j}+b_{j|i})$, $s_{ij}=\frac{1}{2}(b_{i|j}-b_{j|i})$, $s_j=s_{ij}b^i$ and the suffix $`0'$ indicates contraction with $y^i$, for instance, $s_0=s_iy^i$.
\end{lem}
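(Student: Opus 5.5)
The plan is to use projective flatness in its Hamel form: $F$ is projectively flat on $U$ if and only if its spray coefficients can be written as $G^i=P\,y^i$ for some scalar function $P$ that is positively homogeneous of degree one in $y$. The spray coefficients of an $(\alpha,\beta)$-metric are given by \eqref{spray1}:
$$G^{i} = G^{i}_{\alpha} + \alpha Q s^{i}_{0} + (-2\alpha Q s_{0}+r_{00})\bigl(\Psi b^{i}+\Theta\alpha^{-1}y^{i}\bigr).$$
Substituting this, the condition says that the $y^i$-component can be absorbed into $P$. So $F$ is projectively flat exactly when
$$G^{m}_{\alpha}+\alpha Q s^{m}_{0}+\Psi(-2\alpha Q s_0+r_{00})\,b^{m}$$
is proportional to $y^m$.

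To remove the unknown factor of proportionality, I will lower the index with $\alpha_{mi}$ and apply the projection $X_m\mapsto \alpha^2X_i-y_i\,(X_my^m)$. This projection annihilates exactly the multiples of $y$, since $y_my^m=\alpha^2$. It gives the following.

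\begin{itemize}
\item The first term becomes $(\alpha_{mi}\alpha^2-y_my_i)G^m_\alpha$.
\item The second term becomes $\alpha^3Qs_{i0}$, because $s_{m0}y^m=0$ by skew-symmetry.
\item The last term becomes $\Psi(-2\alpha Qs_0+r_{00})(\alpha^2b_i-\beta y_i)$. Using $\beta=s\alpha$, this equals $\Psi\alpha(-2\alpha Qs_0+r_{00})(b_i\alpha-sy_i)$.
\end{itemize}

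These are precisely the three terms of \eqref{lmm1}. Hence the vector above is a multiple of $y$ if and only if \eqref{lmm1} holds.

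Conversely, suppose \eqref{lmm1} holds. Then the vector is $\lambda y$ with $\lambda=(X_my^m)/\alpha^2$, and this $\lambda$ is degree one homogeneous. The spray therefore has the form $P y^i$, and Hamel's theorem in the local chart of $U\subset\mathbb R^n$ then gives projective flatness.

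The only delicate point is the first step, namely the equivalence between projective flatness and $G^i=Py^i$ in the given coordinates. This is where $U$ being an open subset of $\mathbb R^n$, with the standard coordinates, is used. I would quote Hamel's lemma here rather than reprove it. The rest is the algebraic projection bookkeeping described above.
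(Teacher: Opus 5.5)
Your argument is correct. It is the standard proof from the cited source \cite{shenGCY}; the paper itself only quotes this lemma, and later uses the same characterization $G^i=Py^i$. Your two key moves both check out: the $\Theta$-term of \eqref{spray1} is already a multiple of $y^i$, and applying the projection $X^m\mapsto(\alpha_{mi}\alpha^2-y_my_i)X^m$ to the remaining terms gives exactly \eqref{lmm1}. One small remark: the equivalence between straight geodesics on $U$ and $G^i=Py^i$ follows directly from the geodesic equation $\ddot x^i+2G^i(x,\dot x)=0$, so you need not invoke Hamel's lemma proper.
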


In view of (\ref{spray2}) and (\ref{spray4}), for the Golden $(\alpha, \beta)$-metric, equation(\ref{lmm1}) can be written as
\begin{equation}\label{lmm2}
        (\alpha_{mi}\alpha^2-y_my_i)G^m_{\alpha}+\alpha^3\frac{1-2 s}{s^2+1}s_{i0}-\frac{\alpha}{z(s)}(2\alpha s_0\frac{1-2 s}{s^2+1}-r_{00})(b_i\alpha-sy_i)=0.
    \end{equation}
By the above identity, we prove the following:
\begin{thm}
    The golden $(\alpha, \beta)$-metric is projectively flat if and only if
    \begin{enumerate}
        \item[(i)] $b_{i|j}=\tau((2 b^2-1)\alpha_{ij}-3b_ib_j)$
        \item[(ii)] $G^i_{\alpha}=\theta y^i-\tau \alpha^2b^i$, where $\tau=\tau(x)$ and $\theta=\theta_i(x)y^i$.
    \end{enumerate}
     In this case
    \begin{equation}\label{G} G^i=(\theta + \tau \chi \alpha)y^i,\end{equation} 
    where \begin{equation}\label{chi}
    \chi:=\frac{(1-2s)(1+s^2)}{2(s^2-s-1)}-s .\end{equation}
\end{thm}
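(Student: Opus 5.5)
We work directly with the characterization \eqref{lmm2} obtained from the lemma of \cite{shenGCY}, and follow the standard Shen--Yildirim strategy: clear denominators so that \eqref{lmm2} becomes a polynomial identity in $y$ (after writing $s=\beta/\alpha$), then separate it into its rational part and its $\alpha$-irrational part. Multiplying \eqref{lmm2} by $(s^2+1)z(s)$ and then by a suitable power of $\alpha$ gives an equation of the form $A + \alpha B = 0$. Here $A$ and $B$ are polynomials in $y$ built from $\beta$, $\alpha^2$, $s_{i0}$, $s_0$, $r_{00}$, $b_i$, $y_i$ and $(\alpha_{mi}\alpha^2-y_my_i)G^m_\alpha$. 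Since $\alpha$ is irrational in $y$ (for $n\ge 2$), this forces $A=0$ and $B=0$ separately.

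\textbf{Sufficiency.} Assume (i) and (ii). Condition (i) gives $s_{ij}=0$, so $s_{i0}=s_0=0$. It also gives $r_{00}=\tau\big((2b^2-1)\alpha^2-3\beta^2\big)=\tau\alpha^2 z(s)$. Then the last term of \eqref{lmm2} reduces to $\tau\alpha^3(b_i\alpha-sy_i)$. Condition (ii) gives
\[
(\alpha_{mi}\alpha^2-y_my_i)G^m_\alpha=-\tau\alpha^2(b_i\alpha^2-\beta y_i),
\]
because the $\theta y^i$ part is annihilated. The two terms cancel, so \eqref{lmm2} holds.

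For formula \eqref{G}, substitute into \eqref{spray1} with $s^i_0=0$. This gives
\[
G^i=\theta y^i-\tau\alpha^2 b^i+\tau\alpha^2 z\big(\Psi b^i+\Theta\alpha^{-1}y^i\big).
\]
Since $\Psi z=1$, the $b^i$ terms cancel, leaving $\tau\alpha z\Theta\, y^i$. From \eqref{spray3}, $z\Theta=-\tfrac{(s-1)(4s^2+s+1)}{2(s^2-s-1)}$. It remains to verify the algebraic identity $z\Theta=\chi$ with $\chi$ as in \eqref{chi}. Expanding, $(1-2s)(1+s^2)-2s(s^2-s-1)=-4s^3+3s^2+1=-(s-1)(4s^2+s+1)$, which confirms it.

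\textbf{Necessity.} This is the main obstacle. I would contract \eqref{lmm2} with $b^i$ and with $y^i$, and also use the $A=0$, $B=0$ split. Contraction with $y^i$ is automatic, since the left side vanishes identically, so the useful information comes from the irrational split. The term $\alpha^3 Q s_{i0}$ with $Q=(1-2s)/(1+s^2)$ contributes, after multiplying by $\alpha^2+\beta^2$, pieces $\alpha^5 s_{i0}$ and $-2\alpha^4\beta s_{i0}$, which sit in different parity classes. Reading off the terms divisible by a fixed factor, for instance modulo $\alpha^2$ or by comparing the coefficients where $\beta^2+\alpha^2$ and $(2b^2-1)\alpha^2-3\beta^2$ are coprime, should force $s_{i0}=0$. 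The argument uses that $\alpha^2$ is an irreducible quadratic not dividing polynomials in $\beta$ alone. With $s_{ij}=0$, the remaining equation says $(\alpha_{mi}\alpha^2-y_my_i)G^m_\alpha$ equals $\frac{\alpha^2 r_{00}}{(2b^2-1)\alpha^2-3\beta^2}(\beta y_i-\alpha^2 b_i)$. Since the left side is polynomial, $(2b^2-1)\alpha^2-3\beta^2$ must divide $r_{00}$, which is quadratic. Hence $r_{00}=\tau\big((2b^2-1)\alpha^2-3\beta^2\big)$, and together with $s_{ij}=0$ this gives (i). Then $(\alpha_{mi}\alpha^2-y_my_i)(G^m_\alpha+\tau\alpha^2 b^m)=0$, so $G^m_\alpha+\tau\alpha^2b^m$ is proportional to $y^m$. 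By homogeneity the factor is a $1$-form $\theta$, which gives (ii). The delicate points are the divisibility arguments, including the dimension assumption $n\ge 3$ if needed to ensure $\alpha^2\nmid\beta^2$-type expressions, and these I would verify carefully.
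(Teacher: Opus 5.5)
Your sufficiency direction is correct and complete. Condition (i) gives $s_{ij}=0$ and $r_{00}=\tau\alpha^2 z$, condition (ii) makes the surviving terms of \eqref{lmm2} cancel, and the identity $z\Theta=\chi$ holds because $(1-2s)(1+s^2)-2s(s^2-s-1)=1+3s^2-4s^3$. (The paper instead obtains sufficiency from the spray formula $G^i=(\theta+\tau\chi\alpha)y^i$.) Your necessity outline has the same structure as the paper's: a rational/irrational split, closedness of $\beta$, a divisibility argument for $r_{00}$, then (ii). Extracting (ii) from the kernel of $\alpha_{mi}\alpha^2-y_my_i$ is fine. The gap is in the step you yourself call the main obstacle. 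The conclusion $s_{i0}=0$ is asserted (``should force'') rather than proved, and your description of the odd part leaves out a term.

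After multiplying \eqref{lmm2} by $(\alpha^2+\beta^2)[(2b^2-1)\alpha^2-3\beta^2]$, the part odd in $\alpha$ is
\[
\alpha^5\bigl\{[(2b^2-1)\alpha^2-3\beta^2]\,s_{i0}-2s_0\,(b_i\alpha^2-\beta y_i)\bigr\}.
\]
The second piece comes from the $s_0$-term in the last summand of \eqref{lmm2}, not from $\alpha^3Qs_{i0}$. Because $s_{i0}$ is coupled to $s_0$ here, parity, reduction modulo $\alpha^2$, or coprimality of $\alpha^2+\beta^2$ with $(2b^2-1)\alpha^2-3\beta^2$ do not by themselves yield $s_{i0}=0$. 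The missing idea, which the paper uses, is to contract this identity with $b^i$. Since $b^is_{i0}=s_0$ and $b^ib_i=b^2$, it collapses to $-(\alpha^2+\beta^2)s_0=0$, so $s_0=0$. Then $s_{i0}=0$ follows because $(2b^2-1)\alpha^2-3\beta^2$ is a nonzero polynomial. Your claim that $(2b^2-1)\alpha^2-3\beta^2$ ``must divide $r_{00}$'' likewise needs a coprimality argument. Contracting with $b^i$, as the paper does, reduces it to coprimality with $\alpha^2(b^2\alpha^2-\beta^2)$, which holds for every $n\ge 2$, so no $n\ge 3$ assumption is needed. For $n=2$, however, the form $(2b^2-1)\alpha^2-3\beta^2$ is indefinite (as $2b^2>1$) and splits into two real linear factors. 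There you must check that neither factor divides $\alpha^2(b^2\alpha^2-\beta^2)$, rather than appeal to irreducibility.
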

\begin{proof}
First, we rewrite (\ref{lmm2}) as a polynomial in $y$ and $\alpha$. This gives
\begin{equation}\label{proj2}\begin{split}
(\alpha_{mi}\alpha^2-y_my_i)G^m_{\alpha}(\alpha^2+\beta^2)[(2 b^2-1)\alpha^2-3\beta^2]+ 
\alpha^4(\alpha-2 \beta)[(2 b^2-1)\alpha^2-3\beta^2]s_{i0}\\
+\alpha^2[-2\alpha^2 s_0(\alpha-2 \beta)+r_{00}(\alpha^2+\beta^2)](b_i\alpha^2-\beta y_i)=0.\end{split}\end{equation}
The coefficients of the odd powers of $\alpha$ must be zero, as they contain irrational powers of $y$. We obtain
\begin{equation} \label{proj1}
[(2 b^2-1)\alpha^2-3\beta^2]s_{i0}=2s_0(b_i\alpha^2-\beta y_i)
\end{equation}
Contracting by $b^i$, the above equation reduces to
$$s_0(\alpha^2+\beta^2)=0,$$
which gives $s_0=0$ (since, $1+ s^2 \neq 0$). Then, as $(2 b^2-1)\alpha^2-3\beta^2 \neq 0$, it follows from (\ref{proj1}) that
\begin{equation}\label{proj03}
    s_{i0}=0
\end{equation}
Thus, $\beta$ is closed. Now equation (\ref{proj2}) is reduced to
\begin{eqnarray*}
(\alpha_{mi}\alpha^2-y_my_i)G^m_{\alpha}[(2 b^2-1)\alpha^2-3\beta^2]+\alpha^2r_{00}(b_i\alpha^2-\beta y_i)=0.\end{eqnarray*}
Contracting the above equation by $b^i$, we have
$$G^m_{\alpha}(b_m\alpha^2-\beta y_m)[(2 b^2-1)\alpha^2-3\beta^2]+\alpha^2r_{00}(b^2\alpha^2-\beta^2)=0.$$
Since $(2 b^2-1)\alpha^2-3\beta^2$ is neither divisible by $\alpha^2$ nor by $(b^2\alpha^2-\beta^2)$, therefore \\$G^m_{\alpha}(b_m\alpha^2-\beta y_m)$ is divisible by 
$\alpha^2(b^2\alpha^2-\beta^2)$. Therefore, there is a smooth function $\tau $ in $M$ such that
\begin{equation}\label{proj3}
    r_{00}=\tau((2 b^2-1)\alpha^2-3\beta^2).
\end{equation}
Since $s_{i0}=s_{ij}y^j=\frac{1}{2}(b_{i|j}-b_{j|i})y^j$, 
(\ref{proj03}) can be written as $b_{i|0}=b_{0|i}$.
Furthermore, utilizing $r_{00}=r_{ij}y^iy^j=\frac{1}{2}(b_{i|j}+b_{j|i})y^iy^j$ and $b_{i|0}=b_{0|i}$, equation (\ref{proj3}) can be expressed as
$$b_{i|j}=\tau((2 b^2-1)\alpha_{ij}-3b_ib_j).$$
\noindent By (\ref{proj2}) and (\ref{proj3}), the formula (\ref{spray1}) for $G^i$ can be simplified to 
\begin{equation}\label{proj4}
    G^i=G^i_{\alpha}+\tau \chi \alpha y^i+\tau \alpha^2 b^i,
\end{equation}
where $\chi $ is given in (\ref{chi}). We know that $F$ is projectively flat if and only if there exists a positively homogeneous function $P$ in $y$ such that $G^i (x,y) = P (x,y) \,y^i$.
By (\ref{proj4}), this is equivalent to 
$$G^i_{\alpha}=\theta y^i-\tau \alpha^2b^i,$$
where $\theta$  is a $1$-form on $M$ given by $\theta = \theta_{i}(x) y^i$. In this case, the geodesic spray coefficients of the golden $(\alpha , \beta)$-metric $G^i$ are given by (\ref{G}).
\end{proof}

\subsection{Almost rational golden $(\alpha, \beta)$-metric }
Motivated by the comments of D. Bao \cite[pages: 19,20]{Bao}, recently, Taha and Tiwari \cite{TT} defined the almost rational Finsler metrics as follows:
\begin{defn}
A Finsler metric $F$ on a manifold $M$ is said to be an almost rational Finsler metric if the components $g_{ij}(x,y)$ of its Finsler metric tensor are  expressed in the form
\begin{equation} \label{gdef}
g_{ij}(x,y)= \eta(x,y)\, a_{ij}(x,y),
\end{equation}
where \begin{itemize}
\item[(a)] $\eta: TM \longrightarrow (0,\infty )$ is a smooth function,
\item[(b)] the matrix $( a_{ij}(x,y))_{1 \leq i,j \leq n}$ is  symmetric positive definite,
\item[(c)]  for each $i,j,\, a_{ij}(x,y)$ is a  rational function in the fiber coordinate $y$.
\end{itemize}
The pair $(M,F)$ is said to be  an almost rational-Finsler manifold. If, in addition,  $\eta$ is a rational function in $y$, we call the pair $(M,F)$ a rational Finsler manifold.
\end{defn}

It turns out that there are several well-known Finsler metrics, for instance, the Kropina metric, the generalized Kropina metric, which are almost rational Finsler metrics, while the Randers metric is not an almost rational Finsler metric. In the following proposition, we prove that this is also the case with the golden $(\alpha, \beta)$-metric. More precisely, we have

\begin{thm} There exists no almost rational golden $(\alpha, \beta)$-metric.
\end{thm}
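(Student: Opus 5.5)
We argue by contradiction. Suppose the golden $(\alpha,\beta)$-metric $F=\alpha(s^2-s-1)$ is almost rational, so that $g_{ij}=\eta\,a_{ij}$ with $\eta>0$ smooth and each $a_{ij}$ rational in $y$. We will use the explicit form \eqref{golden alpha,beta metric} of $g_{ij}$ to derive a contradiction. The key observation is that $g_{ij}$ is a rational expression in the quantities $y$ and $\alpha$, and $\alpha=\sqrt{\alpha_{ij}y^iy^j}$ is not rational in $y$ at any point. Multiplying \eqref{golden alpha,beta metric} by $\alpha^4$ clears denominators and writes
$$\alpha^4 g_{ij}=A_{ij}+\alpha\,B_{ij},$$
where $A_{ij}$ and $B_{ij}$ are polynomials in $y$ built from $\alpha^2$, $\beta$, $b_i$ and $y_i$. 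The odd powers of $\alpha$ are collected in $\alpha B_{ij}$. For instance, the term $s\,\alpha^{4}\alpha_{ij}=\beta\alpha^3\alpha_{ij}$ contributes $\beta\alpha^2\alpha_{ij}$ to $B_{ij}$.

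Next we take ratios of components. Since the $a_{ij}$ are rational in $y$, the quotients $g_{ij}/g_{kl}=a_{ij}/a_{kl}$ must be rational in $y$ wherever they are defined, and the same holds for any ratio of linear combinations with constant coefficients. Choose a point $x$ and normal coordinates with $\alpha_{ij}=\delta_{ij}$ and $b=(b,0,\dots,0)$. Then compute the ratio $g_{22}/g_{11}$, or more simply the ratio of the trace-free part of $g_{22}$ to that of $g_{11}$ (for $n\ge 2$). Here $g_{22}=\rho_0-\rho_2 s\,y_2^2/\alpha^2$ and $g_{11}=\rho_0+\rho_1b^2+\dots$. Each is a polynomial in $s$ and $1/\alpha$ with coefficients in $y$, and a ratio of the form $(A+\alpha B)/(C+\alpha D)$ is rational in $y$ only if $AD-BC\equiv 0$. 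This follows because $\alpha\notin\mathbb{R}(y)$, so $\mathbb{R}(y)(\alpha)$ is a degree-two extension with basis $\{1,\alpha\}$.

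The main obstacle is verifying $AD-BC\not\equiv0$ for a suitable pair of components. A simple choice is $g_{22}$ together with $\alpha_{22}$-type data, or $g_{23}$ and $g_{22}$ when $n\ge3$. Take $n\ge3$. Then
$$g_{23}=-\frac{\rho_2 s}{\alpha^2}y_2y_3 \quad\text{and}\quad g_{22}=\rho_0-\frac{\rho_2 s}{\alpha^2}y_2^2,$$
and their ratio is rational if and only if $\rho_0\alpha^2/(\rho_2 s)$ is rational in $y$. In that case
$$\frac{\rho_0}{s\rho_2}=\frac{1+s+s^3-s^4}{s(1+3s^2-4s^3)}$$
would have to be a rational function $R(y)$. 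Substituting $s=\beta/\alpha$ and clearing denominators gives a polynomial identity in $\alpha$ over $\mathbb{R}(y)$. Separating the even and odd parts in $\alpha$ forces a polynomial relation between $\beta$ and $\alpha^2$. This is impossible because $\alpha^2$ is irreducible (for $n\ge3$) and not divisible by $\beta$. Alternatively, the claim is equivalent to $s$ satisfying a nontrivial polynomial equation with coefficients in $\mathbb{R}(y)$ of degree at most one in $\alpha$. That can be checked by comparing the two sides at $y$ with $\beta=0$, where $s=0$ and the left side blows up, and at $y$ with $\beta\ne0$ along the two sign choices $\pm\alpha$. The map $\alpha\mapsto-\alpha$ (Galois conjugation) sends $s\mapsto -s$, so rationality would force the function $\rho_0(s)/(s\rho_2(s))$ to be even in $s$. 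Checking $s=1$ against $s=-1$ gives $2/(-1)=-2$ versus $-2/(-1\cdot 8)$, which differ. This yields the contradiction.

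Finally, we handle the low-dimensional case $n=2$ in the same way. Using the ratio $g_{12}/g_{22}$ with $b$ along $e_1$, we get $g_{12}=\rho_2 y_2/\alpha-\rho_2 s y_1y_2/\alpha^2$. The conjugation $s\mapsto-s$, $\alpha\mapsto-\alpha$ again produces a function of $s$ that would have to have a definite parity, and this fails at a sample value. Hence no factorization $g_{ij}=\eta a_{ij}$ with rational $a_{ij}$ exists, which proves that there is no almost rational golden $(\alpha,\beta)$-metric.
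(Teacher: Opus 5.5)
Your core argument is correct: divide components to eliminate $\eta$, then apply the conjugation $\alpha\mapsto-\alpha$ on $\mathbb{R}(y)(\alpha)$. It is a genuinely different route from the paper's.

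The paper writes $g_{ij}=A_{ij}+\alpha B_{ij}$ with $A_{ij},B_{ij}$ rational in $y$. It asserts that almost rationality comes down to $A_{ij}\equiv 0$ or $B_{ij}\equiv 0$. It then rules out both by contracting with $y^iy^j$, which gives $(\alpha^4-\alpha^2\beta^2+\beta^4)/\alpha^2\neq 0$ and $2\beta(\alpha^2-\beta^2)/\alpha^2\neq 0$. That is short and frame-free, but the dichotomy only covers $\eta$ rational or $\eta=\alpha$ times a rational function. Since $\eta$ is an arbitrary positive function, the actual criterion is that every ratio $g_{ij}/g_{kl}$ lies in $\mathbb{R}(y)$. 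Equivalently $A_{ij}B_{kl}=A_{kl}B_{ij}$, i.e.\ $A$ and $B$ are proportional, as happens for $\eta=\lambda+\alpha\mu$. The paper never excludes this proportional case. Your ratio test is exactly that criterion, so your route covers it. The price is fixing a frame at a point and checking that $f(s)=\rho_0(s)/(s\rho_2(s))$ is not even. For that step, note that $s=\beta/\alpha$ is transcendental over $\mathbb{R}$, so $f(s)=f(-s)$ in $\mathbb{R}(y)(\alpha)$ forces $f(X)=f(-X)$ in $\mathbb{R}(X)$.

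Several details need fixing.
\begin{itemize}
\item $\rho_2(1)=1+3-4=0$, so $f$ has a pole at $s=1$ rather than the value $-2$. Parity still fails, since $f(-1)=1/4$ is finite; alternatively, $f(2)=5/38$ while $f(-2)=5/18$.
\item The remark that ``$\alpha^2$ is irreducible and not divisible by $\beta$'' does not justify anything, and the $\beta=0$ remark is irrelevant; drop both. If you want the even/odd separation instead, do it explicitly. It gives $R=(\alpha^2+\beta^2)/(\alpha^2+3\beta^2)=(\beta^4-\alpha^4)/(4\beta^4)$, hence $(\alpha^2+\beta^2)^2=0$, which is impossible.
\item For $n=2$ you dropped a factor $b$. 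Using $\alpha^2-y_1^2=y_2^2$, one gets $g_{12}=b\rho_2y_2^3/\alpha^3$ and $g_{22}/g_{12}=\rho_0\alpha^3/(b\rho_2y_2^3)-y_1/y_2$. So rationality again forces $f(s)$ to be rational, and the same parity check finishes that case.
\item The metric lives only on the conic set $\mathcal{D}$. You therefore need the easy fact that $P+\alpha Q=0$ on an open set, with $P,Q$ rational, forces $P=Q=0$. This lets you pass to identities in $\mathbb{R}(y)(\alpha)$.
\item You tacitly assume $n\ge 2$, as the paper does. For $n=1$ every metric is trivially almost rational.
\end{itemize}
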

\begin{proof} 
The golden $(\alpha , \beta)$-metric is given by  $F = \alpha \phi (s)$ with
 $\phi(s):=s^2-s-1$. In view of equation (\ref{golden alpha,beta metric}), its metric tensor is given by

\begin{eqnarray} \label{golden alpha,beta metric again}
   g_{ij}&=& \frac{\alpha ^{4} + \beta \, \alpha ^{3} + \beta ^{3} \, \alpha - \beta ^{4}}{\alpha ^{4}}\, \alpha_{ij} + \frac{6\beta ^2 - 6 \alpha \beta - \alpha ^{2}}{\alpha ^{2}}\, b_{i}\, b_{j} \\ \nonumber &&
   + \frac{(\alpha ^3 + 3 \alpha \, \beta ^2 - 4 \beta ^3 )}{\alpha ^{4}} \, \left(b_{i} y_{j} + b_{j} y_{i} -   \frac{\beta}{\alpha ^2 }y_{i} y_{j}\right),
\end{eqnarray} 
This can be rewritten as 
\begin{eqnarray} \label{golden alpha,beta metric 2}
   g_{ij}&=& A_{ij} + \alpha B_{ij} \end{eqnarray} 
where $A_{ij}$ and $B_{ij}$ are rational functions of $y$ given by
\begin{eqnarray} \label{golden alpha,beta metric 3}A_{ij}=\frac{\alpha ^{4} - \beta ^{4}}{\alpha ^{4}}\, \alpha_{ij} + \frac{6\beta ^2  - \alpha ^{2}}{\alpha ^{2}}\, b_{i}\, b_{j} 
   - \frac{4 \beta ^3 }{\alpha ^{4}} \, \left(b_{i} y_{j} + b_{j} y_{i} -   \frac{\beta}{\alpha ^2 }y_{i} y_{j}\right),\end{eqnarray}
and 
\begin{eqnarray} \label{golden alpha,beta metric 4}B_{ij}= \frac{ \beta \, \alpha ^{2} + \beta ^{3} }{\alpha ^{4}}\, \alpha_{ij} - \frac{6  \beta }{\alpha ^{2}}\, b_{i}\, b_{j} 
   + \frac{(\alpha ^2 + 3  \beta ^2  )}{\alpha ^{4}} \, \left(b_{i} y_{j} + b_{j} y_{i} -   \frac{\beta}{\alpha ^2 }y_{i} y_{j}\right).\end{eqnarray}
In view of equation (\ref{golden alpha,beta metric 2}), the golden Finsler metric is almost rational if either $A_{ij} $ or $B_{ij} $ vanishes identically.

If possible, let $A_{ij} =0$. Contracting equation (\ref{golden alpha,beta metric 3}) with $y^iy^j$, and taking into account $\alpha ^2 = \alpha_{ij} y^i y^j ,\, \beta = b_i y^i$, we have
\begin{eqnarray*}
y^iy^jA_{ij}&=&\frac{\alpha ^{4} - \beta ^{4}}{\alpha ^{4}}\, \alpha_{ij}y^iy^j + \frac{6\beta ^2  - \alpha ^{2}}{\alpha ^{2}}\, b_{i}\, b_{j} y^iy^j \\
&& - \frac{4 \beta ^3 }{\alpha ^{4}} \, \left(b_{i} y_{j} + b_{j} y_{i} -   \frac{\beta}{\alpha ^2 }y_{i} y_{j}\right)y^iy^j \\ &=&
 \frac{\alpha ^{4} - \beta ^{4}}{\alpha ^{4}}\, \alpha^2 + \frac{6\beta ^2  - \alpha ^{2}}{\alpha ^{2}}\, \beta^2
   - \frac{4 \beta ^3 }{\alpha ^{4}} \, \left(2\beta \alpha^2 -   \frac{\beta}{\alpha ^2 }\alpha^4\right)  \\ &=&  
\frac{\alpha ^{4} - \beta ^{4}}{\alpha ^{2}} + \frac{6\beta ^4  - \alpha ^{2}\beta^2}{\alpha ^{2}} 
   - \frac{4 \beta ^4 }{\alpha ^{2}}     \\    &=&  
 \frac{\alpha ^{4} - \beta ^{4} + 6\beta ^4  - \alpha ^{2} \beta^2 - 4\beta^4 }{\alpha ^{2}} \\ &=& \frac{\alpha ^{4}  - \alpha ^{2} \beta^2 + \beta^4 }{\alpha ^{2}}= 0 .      
\end{eqnarray*}
That is, $\alpha^4 - \alpha^2\beta^2 + \beta^4 = 0$. Treating this as a quadratic equation in terms of $\alpha^2$, the discriminant is $\Delta = (-\beta^2)^2 - 4(1)(\beta^4) = -3\beta^4 < 0$. This implies there are no real roots for $\alpha^2$, which is a contradiction since the metric scalar $\alpha$ must be real and positive.

Now, if $B_{ij} =0$, contracting equation (\ref{golden alpha,beta metric 4}) with $y^iy^j$ yields $\frac{2\beta(\alpha^2 - \beta^2)}{\alpha^2}=0$. Assuming $\beta \neq 0$ (otherwise $F$ reduces to a purely Riemannian metric), this gives $\alpha^2 = \beta^2$. This implies $\alpha_{ij} y^i y^j = b_i b_j y^i y^j$, meaning the Riemannian metric tensor $\alpha_{ij} = b_i b_j$. This is a contradiction, as the outer product $b_i b_j$ has a rank of 1, whereas a positive-definite Riemannian metric must have full rank $n \geq 2$.

Hence, there is no golden $(\alpha, \beta)$-metric whose Finsler metric tensor can be written in the form $\eqref{gdef}$.
\end{proof}

\section{Golden Finsler change: Global study}
\subsection{Preliminaries}
Let $M$ be an $n$-dimensional smooth manifold and $(TM, \pi, M)$ be the tangent bundle with the differential $d\pi: TTM \longrightarrow TM$ and slit tangent bundle $\T M$. The vertical bundle $V(TM)$ of $TM$ is defined by $\ker(d\pi)$, and the pullback bundle of the tangent bundle is denoted by $\pi^{-1}(TM)$. We use the notation $C^\infty(TM)$ for the algebra of $C^\infty$ functions on $TM$ and $\cp$ the $C^\infty(TM)$-module of differentiable sections of the pullback bundle $\pi^{-1}(TM)$. We call the elements of $\cp$ $\pi$-vector fields and will be denoted by barred letters $\overline{X}$. 


We have the short exact sequence of vector bundles \cite{r21, Soleiman-Taha_Mat}

$$
0 \longrightarrow \pi^{-1}(T M) \stackrel{\gamma}\longrightarrow T{TM} \stackrel{\rho}\longrightarrow \pi^{-1}(T M) \longrightarrow 0,
$$
where $\gamma$ is the natural injection and $\rho := (\pi, d\pi)$.

\par Let us recall some basics and properties of the Klein-Grifone approach to Finsler geometry. We refer to \cite{r21, r22, r27} for more details. The tangent structure (or the vertical endomorphism) $J$ of $TM$ is defined by $J = \gamma \circ \rho$. The Liouville vector field (or the fundamental $\pi$-vector field) $\mathcal{C}$ is defined by $\mathcal{C} := \gamma \overline{\eta}$, where $\overline{\eta}(u) = (u,u)$ for all $u \in \T M$.
\begin{defn}\cite{Finsler definitions, Lovas, r94a}
A Finsler metric (or, Finsler structure) $F$ on $M$ is a map $F: TM \To [0,\infty)$
 such that $F $  is smooth on  $\T M$,  continuous on $TM$,
     homogeneous of degree $1$ in the directional argument $y$ (i.e., $\mathcal{L}_{\mathcal{C}} F=F$, and the exterior $2$-form
    $\Omega:=\frac{1}{2} dd_{J}F^2$  has maximal rank.
    The Finsler metric tensor $g$ induced by $F$ on $\pi^{-1}(TM)$  is defined by
\begin{equation}\label{g}
g(\rho X,\rho Y):=\Omega(JX,Y), \ \forall  X, Y \in
    \mathfrak{X}(TM).
\end{equation}
We will use the notation $(M,F)$ for a Finsler manifold. Moreover, if $F$  satisfies the above conditions on a conic  subset $\mathfrak{U}$ of $\T M$ (that is,   if $p\in \mathfrak{U}$ and $\lambda>0$, then $\lambda p \in \mathfrak{U}$), then  $(M,F)$ is  called a conic Finsler manifold.
 \end{defn}
For a Finsler manifold $(M, F)$, the associated canonical spray is vector field $G$ on $TM$ which is $C^{1}$,   smooth on $\T M$ such that $JG = \mathcal{C}$, $[\mathcal{C}, G] = G$, and satisfies $i_{G}dd_J F^2 = -d F^2$. Consequently, the Barthel connection $\Gamma$ defined by $\Gamma = [J, G]$ has horizontal and vertical projectors which are given, respectively, by $h := \frac{1}{2}(Id + \Gamma)$ and $v := \frac{1}{2}(Id - \Gamma)$. The curvature of $\Gamma$ is defined by $\mathfrak{R} := -\frac{1}{2}[h, h]$. \\

For a regular linear connection $D$ on $\pi^{-1}(TM)$, the associated connection map $K$ is defined by \vspace{-0.1cm} $$K: TTM \longrightarrow \pi^{-1}(TM): X \longmapsto D_X \overline{\eta},$$ and the horizontal space $H_u(TM)$ at $u$ is $H_u(TM) := \{ X \in T_u(TM) : K(X) = 0 \}$. Consequently,
$$
T_u(TM) = V_u(TM) \oplus H_u(TM) \quad \forall \, u \in TM.
$$

Moreover, the vector bundle maps $\rho |_{H(TM)}$ and $K |_{V(TM)}$ are isomorphisms. The map $\beta := (\rho |_{H(TM)})^{-1}$ is called the horizontal map of $D$. Hence, $\rho \circ \beta = \beta \circ \rho = Id$. \\

Two well-known linear connections in the context of Finsler geometry are used in the sequel, namely, Cartan and Berwald connections on $\pi^{-1}(TM)$. The Cartan connection $\nabla$ is metric (i.e., $\nabla g = 0$, or equivalently, $\nabla_{\gamma \overline{X}}g = 0$, $\nabla_{\beta \overline{X}}g = 0$) with vanishing (h)h-torsion $Q = 0$, and its (h)hv-torsion $T$ satisfies $g(T(\overline{X}, \overline{Y}), \overline{Z}) = g(T(\overline{X}, \overline{Z}), \overline{Y})$. On the other hand, the metricity of Berwald connections is characterized by \cite{r92}
$$
(D^{\circ}_{\gamma \overline{X}}g)(\overline{Y}, \overline{Z}) = 2\mathbf{T}(\overline{X}, \overline{Y}, \overline{Z}), \quad (D^{\circ}_{\beta \overline{X}}g)(\overline{Y}, \overline{Z}) = -2\widehat{\mathbf{P}}(\overline{X}, \overline{Y}, \overline{Z}),
$$
where $\widehat{\mathbf{P}}(\overline{X}, \overline{Y}, \overline{Z}) := g(\widehat{P}(\overline{X}, \overline{Y}), \overline{Z})$ and $\widehat{P}$ is the (v)hv-torsion tensor of the Cartan connection. For more details about the pullback approach to global Finsler geometry, we refer, for example, to \cite{r93, amr3, Szilasi-book, r94, r96}.

\begin{lem}\cite{Soleiman-Taha_Mat}\label{B1} 
Given a Finsler metric $F$, one can obtain
\begin{description}
    \item[(a)] $D^{\circ}_{\gamma \overline{X}}F = dF(\gamma \overline{X}) = d_{J}F(\beta \overline{X}) = \ell(\overline{X}), \quad d_{J}F(\gamma \overline{X}) = 0, \quad \ell := F^{-1}i_{\overline{\eta}}g.$
    \item[(b)] $D^{\circ}_{\beta \overline{X}}F = d_{h}F(\beta \overline{X}) = dF(\beta \overline{X}) = 0.$
    \item[(c)] $\hbar(\overline{X}, \overline{Y}) = (D^{\circ}_{\gamma \overline{X}}\ell)(\overline{Y})F = (\nabla_{\gamma \overline{X}}\ell)(\overline{Y})F, \quad \Omega(\gamma \overline{X}, \beta \overline{Y}) = g(\overline{X}, \overline{Y}).$
\end{description}
\end{lem}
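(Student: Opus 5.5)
The plan is to derive all three parts from the defining relations of the Klein--Grifone formalism: the identities $\rho\circ\gamma=0$, $\rho\circ\beta=Id$ and $J\beta=\gamma$; the homogeneity $\mathcal{L}_{\mathcal{C}}F=F$; and the definition (\ref{g}) of $g$ through $\Omega=\frac12 dd_JF^2$. Throughout I take $D^{\circ}$ to be the Berwald connection, whose horizontal map $\beta$ is that of the Barthel connection $\Gamma=[J,G]$.

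\emph{Part (a).} For any function $f$ one has $D^{\circ}_{\gamma\overline{X}}f=df(\gamma\overline{X})$. Since $d_JF=dF\circ J$, the relations $J\gamma=\gamma\rho\gamma=0$ and $J\beta\overline{X}=\gamma\overline{X}$ give at once
\[
d_JF(\gamma\overline{X})=0,\qquad d_JF(\beta\overline{X})=dF(\gamma\overline{X}).
\]
To identify $dF(\gamma\overline{X})$ with $\ell(\overline{X})$, I would evaluate the defining relation $i_G\,dd_JF^2=-dF^2$ of the spray. Combining (\ref{g}) with $JG=\mathcal{C}=\gamma\overline{\eta}$ yields $g(\overline{\eta},\overline{X})=\frac12 d_JF^2(\beta\overline{X})$. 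This expansion uses homogeneity, which gives $d_JF^2(G)=\mathcal{C}(F^2)=2F^2$, and I expect it to be the only point requiring care. The right-hand side equals $F\,dF(\gamma\overline{X})$, so $dF(\gamma\overline{X})=F^{-1}g(\overline{\eta},\overline{X})=\ell(\overline{X})$.

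\emph{Part (b).} Again $D^{\circ}_{\beta\overline{X}}F=dF(\beta\overline{X})$, and $d_hF=dF\circ h$ with $h\beta=\beta$ gives $d_hF(\beta\overline{X})=dF(\beta\overline{X})$. It remains to show that $F$ is horizontally constant, $d_hF=0$. Grifone's theorem on the canonical spray states that the Barthel connection of a Finsler metric is conservative, i.e.\ $d_hF^2=0$, and this follows from $i_Gdd_JF^2=-dF^2$. Since $F>0$ on $\T M$, this gives $d_hF=0$. This is the one step I expect to be the main obstacle if it has to be proved from scratch rather than cited. In that case I would compute $d_hF^2=\frac12 d_{[J,G]}F^2+\frac12 dF^2$ and use $d_{[J,G]}=[i_G,d_J]$-type commutation identities together with homogeneity.

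\emph{Part (c).} Here $\hbar$ denotes the angular metric $g-\ell\otimes\ell$. Differentiating $\ell=F^{-1}i_{\overline{\eta}}g$ vertically with $D^{\circ}$ uses three facts: $D^{\circ}_{\gamma\overline{X}}\overline{\eta}=\overline{X}$, part (a) for $dF(\gamma\overline{X})$, and $(D^{\circ}_{\gamma\overline{X}}g)(\overline{\eta},\overline{Y})=2\mathbf{T}(\overline{X},\overline{\eta},\overline{Y})=0$, the Cartan tensor vanishing when contracted with $\overline{\eta}$. These give
\[
F\,(D^{\circ}_{\gamma\overline{X}}\ell)(\overline{Y})=g(\overline{X},\overline{Y})-\ell(\overline{X})\ell(\overline{Y})=\hbar(\overline{X},\overline{Y}).
\]
The Cartan connection gives the same result, since $\nabla g=0$ and $\nabla_{\gamma\overline{X}}\overline{\eta}=\overline{X}$. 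Finally, $\Omega(\gamma\overline{X},\beta\overline{Y})=g(\overline{X},\overline{Y})$ follows directly from (\ref{g}) with $X=\beta\overline{X}$, $Y=\beta\overline{Y}$, using $J\beta\overline{X}=\gamma\overline{X}$ and $\rho\beta=Id$.
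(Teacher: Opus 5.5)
The paper gives no proof of this lemma; it simply quotes it from \cite{Soleiman-Taha_Mat}. Your argument is correct and is the standard derivation in the Klein--Grifone/pullback formalism.

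Two small tidy-ups. In (a), the spray equation alone already suffices: $g(\overline{X},\overline{\eta})=\Omega(\gamma\overline{X},G)=-\frac12(i_G\,dd_JF^2)(\gamma\overline{X})=\frac12 dF^2(\gamma\overline{X})$, so no separate appeal to $d_JF^2(G)=2F^2$ is needed. In (b), if you prove conservativity from scratch, the computation gives $d_hF^2=\frac12 d_J(G\cdot F^2)$. The ingredient your sketch leaves implicit is therefore $G\cdot F^2=-dd_JF^2(G,G)=0$, which follows from the spray equation by antisymmetry.
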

\subsection{Golden Finsler structure}
Now, an intrinsic study of the golden Finsler structure, as well as that of the golden $(\alpha, \beta)$-metric, is underway. 

\begin{defn}\cite{r94a}\label{ch5def.ind} Let $(M,F)$ be a Finsler manifold. A $\pi$-vector field $\overline{W} \in \cp$ (resp. a $\pi$-form $\Theta$) is said to be independent of the fiber coordinates $y$ if $D^{\circ}_{\gamma \overline{U}}\overline{W}=0$ (resp. $D^{\circ}_{\gamma \overline{U}}\, \Theta=0$) for all $\overline{U} \in \cp$.
\end{defn}

\begin{defn} For a Finsler manifold $(M,F)$, let $\mathbf{B}$ be a scalar $\pi$-form independent of the directional argument $y$, and let $\mathfrak{B}$ be a smooth function on $TM$ defined by ${\mathfrak{B}}(x,y) := \mathbf{B}(\overline{\eta})$. The Finsler structure $F$ is changed to $\widetilde{F}$ as follows:
\begin{equation}\label{change}
\widetilde{F}(x,y) = \frac{\mathfrak{B}^2(x,y)}{F(x,y)} - F(x,y) - \mathfrak{B}(x,y).
\end{equation}
If $\widetilde{F}$ defines a new Finsler structure on $M$, then $\widetilde{F}$ will be referred to as a golden Finsler metric.
\end{defn}

\begin{rem} 
It is clear that the golden Finsler metric $\widetilde{F}(x,y) = F\phi(s)$ with $s := \frac{\mathfrak{B}}{F}$ and $\phi(s) := s^2 - s - 1$, whose roots are $s_1 = \frac{1}{2} \left(1-\sqrt{5}\right)$ and $s_2 = \frac{1}{2} \left(1+\sqrt{5}\right)$.
Moreover, the golden Finsler structure $\widetilde{F}$ is positive (i.e., $\widetilde{F} > 0$) for 
$$ 
s < \frac{1}{2} \left(1-\sqrt{5}\right) \quad \text{or} \quad s > \frac{1}{2} \left(1+\sqrt{5}\right).
$$
\end{rem}

\begin{lem}\label{B} 
\begin{description}
    \item[(i)] Under the golden Finsler change $\widetilde{F}$ of a Finsler metric $F$, the vertical counterpart of the Berwald connection ${{D}}^{\circ}_{\gamma \overline{X}} \overline{Y}$ remains invariant, that is,
    ${{\widetilde{D}}}^{\circ}_{\gamma \overline{X}}\, \overline{Y} = {{D}}^{\circ}_{\gamma \overline{X}}\, \overline{Y}.$
    
    \item[(ii)] Let $(M,F)$ be a Finsler manifold equipped with a scalar $\pi$-form $\mathbf{B}$ which is independent of the directional argument $y$, and $\overline{A}$ its associated $\pi$-vector field given by $i_{\overline{A}}\,g := \mathbf{B}$. Then, the function $\mathfrak{B}(x,y)$ has the following properties:
    \begin{description}
        \item[(a)] $d_{J}\mathfrak{B}(\gamma \overline{X}) = 0, \quad {{D}}^{\circ}_{\gamma \overline{X}}\mathfrak{B} = d\mathfrak{B}(\gamma \overline{X}) = d_{J}\mathfrak{B}(\beta \overline{X}) = \mathbf{B}(\overline{X})$.
        
        \item[(b)] $d_{h}\mathfrak{B}(\beta \overline{X}) = {{D}}^{\circ}_{\beta \overline{X}}\mathfrak{B} = d\mathfrak{B}(\beta \overline{X}) = F\,\ell({{D}}^{\circ}_{\beta \overline{X}}\overline{A}), \quad d\mathfrak{B}(G) = F\,\ell({{D}}^{\circ}_{G}\overline{A})$.
        
        \item[(c)] ${{D}}^{\circ}_{\gamma \overline{X}}\overline{A} = -2{T}(\overline{X},\overline{A})$.
    \end{description}
\end{description}
\end{lem}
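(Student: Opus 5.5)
The plan is to prove each item directly from the definitions of the Berwald connection $D^{\circ}$ and the pullback formalism, using the metricity relations for $D^{\circ}$ recalled above and Lemma \ref{B1}.

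For \textbf{(i)}, I will use the fact that the vertical counterpart of the Berwald connection is built only from the vertical structure:
$$D^{\circ}_{\gamma\overline{X}}\overline{Y}=\rho[\gamma\overline{X},\beta\overline{Y}].$$
In local coordinates this is simply $X^j\,\partial Y^i/\partial y^j$. In particular, it does not depend on the spray: replacing $\beta$ by $\widetilde{\beta}$ changes $\beta\overline{Y}$ only by a vertical vector, and brackets of vertical vector fields are vertical, so they are killed by $\rho$. Hence the vertical counterpart is the same for $F$ and for $\widetilde{F}$. This is exactly the statement ${\widetilde{D}}^{\circ}_{\gamma \overline{X}}\overline{Y}={D}^{\circ}_{\gamma \overline{X}}\overline{Y}$.

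For \textbf{(ii)(a)}, I will expand $\mathfrak{B}=\mathbf{B}(\overline{\eta})$ with the Leibniz rule:
$$D^{\circ}_{\gamma\overline{X}}\mathfrak{B}=(D^{\circ}_{\gamma\overline{X}}\mathbf{B})(\overline{\eta})+\mathbf{B}(D^{\circ}_{\gamma\overline{X}}\overline{\eta}).$$
The first term vanishes because $\mathbf{B}$ is independent of $y$ (Definition \ref{ch5def.ind}). For the second term, $D^{\circ}_{\gamma\overline{X}}\overline{\eta}=K(\gamma\overline{X})=\overline{X}$, since $K$ restricted to the vertical bundle inverts $\gamma$. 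So the derivative equals $\mathbf{B}(\overline{X})$. The identities $d_J\mathfrak{B}(\gamma\overline{X})=d\mathfrak{B}(J\gamma\overline{X})=0$ and $d_J\mathfrak{B}(\beta\overline{X})=d\mathfrak{B}(\gamma\overline{X})$ then follow from $J\circ\gamma=0$ and $J\circ\beta=\gamma$, as in Lemma \ref{B1}(a).

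For \textbf{(ii)(b)}, I will expand in the same way:
$$D^{\circ}_{\beta\overline{X}}\mathfrak{B}=(D^{\circ}_{\beta\overline{X}}\mathbf{B})(\overline{\eta})+\mathbf{B}(K\beta\overline{X}).$$
The second term vanishes because $K\circ\beta=0$. Writing $\mathbf{B}=i_{\overline{A}}g$, the first term becomes
$$(D^{\circ}_{\beta\overline{X}}g)(\overline{A},\overline{\eta})+g(D^{\circ}_{\beta\overline{X}}\overline{A},\overline{\eta}).$$
By the recalled metricity identity, the first summand equals $-2\widehat{\mathbf{P}}(\overline{X},\overline{A},\overline{\eta})$. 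I expect this to be the main point needing justification. I will argue that it vanishes because $\widehat{P}$ is totally symmetric and $\widehat{P}(\overline{X},\overline{\eta})=0$, a standard property of the Cartan connection: it follows from $\nabla_{\beta\overline{X}}\overline{\eta}=0$ together with the relation between $\nabla$ and $D^{\circ}$ in the horizontal direction. The remaining summand is $g(D^{\circ}_{\beta\overline{X}}\overline{A},\overline{\eta})=F\,\ell(D^{\circ}_{\beta\overline{X}}\overline{A})$, by the definition $\ell=F^{-1}i_{\overline{\eta}}g$. For the statement about $G$, I will use that the canonical spray is horizontal with $\rho G=\overline{\eta}$, so that $G=\beta\overline{\eta}$; setting $\overline{X}=\overline{\eta}$ then gives $d\mathfrak{B}(G)=F\,\ell(D^{\circ}_G\overline{A})$.

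For \textbf{(ii)(c)}, I will differentiate $\mathbf{B}=i_{\overline{A}}g$ vertically and use again that $\mathbf{B}$ is independent of $y$:
$$0=(D^{\circ}_{\gamma\overline{X}}\mathbf{B})(\overline{Y})=(D^{\circ}_{\gamma\overline{X}}g)(\overline{A},\overline{Y})+g(D^{\circ}_{\gamma\overline{X}}\overline{A},\overline{Y}).$$
By the vertical metricity identity, the first term is $2\mathbf{T}(\overline{X},\overline{A},\overline{Y})=2g(T(\overline{X},\overline{A}),\overline{Y})$. Hence
$$g\bigl(D^{\circ}_{\gamma\overline{X}}\overline{A}+2T(\overline{X},\overline{A}),\overline{Y}\bigr)=0 \quad\text{for all } \overline{Y}.$$
Non-degeneracy of $g$ then yields $D^{\circ}_{\gamma\overline{X}}\overline{A}=-2T(\overline{X},\overline{A})$.
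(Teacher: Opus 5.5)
Your proposal is correct and matches what the paper intends: the paper gives no argument of its own and only says the proof is similar to \cite{r92, Soleiman-Taha_Mat}. Your steps are the standard direct computations behind those results: $D^{\circ}_{\gamma\overline{X}}\overline{Y}=\rho[\gamma\overline{X},\beta\overline{Y}]$ for (i), and for (ii) the Leibniz rule together with $K\circ\gamma=\mathrm{Id}$, $K\circ\beta=0$, the metricity identities of $D^{\circ}$, $\widehat{P}(\overline{X},\overline{\eta})=0$ and the total symmetry of $\widehat{\mathbf{P}}$. The one identity you leave implicit, $d_h\mathfrak{B}(\beta\overline{X})=d\mathfrak{B}(\beta\overline{X})$, follows at once from $h\circ\beta=\beta$.
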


\begin{proof} 
The proof is similar to the corresponding results in \cite{r92, Soleiman-Taha_Mat}.
\end{proof}
\begin{prop}\label{hh1} 
For the golden Finsler metric {\em \eqref{change}}, we obtain the following:
\begin{description} 
    \item[(a)] its fundamental Finsler metric tensor $\widetilde{g}$ is given by
    \begin{align*}
    \widetilde{g}(\overline{X},\overline{Y}) &= C_1(F, \mathfrak{B})\,g(\overline{X},\overline{Y})
     -\frac{\mathfrak{B}}{F}C_2(F, \mathfrak{B})\,\ell(\overline{X})\,\ell(\overline{Y}) + C_3(F, \mathfrak{B})\,\mathbf{B}(\overline{X})\,\mathbf{B}(\overline{Y})\\
    &\quad +C_2(F, \mathfrak{B})\,\set{\mathbf{B}(\overline{X})\,\ell(\overline{Y})+\mathbf{B}(\overline{Y})\,\ell(\overline{X})},
    \end{align*}
    where 
    \begin{align*}
        C_1(F, \mathfrak{B}) &:= \frac{\left(\mathfrak{B}^2+F^2\right) \left(-\mathfrak{B}^2+\mathfrak{B} F+F^2\right)}{F^4}, \\
        C_2(F, \mathfrak{B}) &:= \frac{F^3-4 \mathfrak{B}^3+3 \mathfrak{B}^2 F}{F^3}, \quad C_3(F, \mathfrak{B}) := \frac{6 \mathfrak{B}^2-6 \mathfrak{B} F-F^2}{F^2}.
    \end{align*}
    
    \item[(b)] its Cartan tensor $\widetilde{\mathbf{T}}$ is given by
\begin{align*}
    2\widetilde{\mathbf{T}}(\overline{X},\overline{Y},\overline{Z}) &= 2C_1(F, \mathfrak{B})\,\,T(\overline{X},\overline{Y},\overline{Z}) \\
    &\quad -\frac{\mathfrak{B}\,C_2(F, \mathfrak{B})}{F^2}\,\set{\hbar(\overline{X},\overline{Z})\,\ell(\overline{Y}) + \hbar(\overline{Y},\overline{Z})\,\ell(\overline{X})} \\
    &\quad +\frac{C_2(F, \mathfrak{B})}{F}\set{\mathbf{B}(\overline{X})\,\hbar(\overline{Y},\overline{Z}) + \mathbf{B}(\overline{Y})\,\hbar(\overline{X},\overline{Z})} \\
    &\quad + \left({D^\circ_{\gamma \overline{Z}}}C_1(F, \mathfrak{B})\right)\,g(\overline{X},\overline{Y})  + \left({D^\circ_{\gamma \overline{Z}}}C_3(F, \mathfrak{B})\right)\,\mathbf{B}(\overline{X})\,\mathbf{B}(\overline{Y}) \\
    &\quad - \left({D^\circ_{\gamma \overline{Z}}}\left[ \frac{\mathfrak{B}}{F}C_2(F, \mathfrak{B}) \right]\right)\,\ell(\overline{X})\,\ell(\overline{Y}) \\
    &\quad + \left({D^\circ_{\gamma \overline{Z}}}C_2(F, \mathfrak{B})\right)\set{\mathbf{B}(\overline{X})\,\ell(\overline{Y})+\mathbf{B}(\overline{Y})\,\ell(\overline{X})}.
\end{align*}
\end{description}
\end{prop}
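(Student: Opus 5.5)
We work directly from the definition of the metric tensor in the pullback formalism: $\widetilde{g}(\overline{X},\overline{Y})=\frac12 D^\circ_{\gamma\overline{X}}D^\circ_{\gamma\overline{Y}}\widetilde{F}^2$, which is equivalent to \eqref{g} with $\Omega=\frac12 dd_J\widetilde F^2$. By Lemma \ref{B}(i), the vertical Berwald derivative is the same for $F$ and $\widetilde F$, so only one operator $D^\circ_{\gamma\overline{X}}$ appears. All derivatives are then computed with the chain rule from three basic rules:
\begin{itemize}
\item $D^\circ_{\gamma\overline{X}}F=\ell(\overline{X})$, by Lemma \ref{B1}(a);
\item $D^\circ_{\gamma\overline{X}}\mathfrak{B}=\mathbf{B}(\overline{X})$, by Lemma \ref{B}(ii)(a);
\item $D^\circ_{\gamma\overline{X}}\mathbf{B}=0$, since $\mathbf{B}$ is independent of $y$.
\end{itemize}
In addition we use $(D^\circ_{\gamma\overline{X}}\ell)(\overline{Y})=F^{-1}\hbar(\overline{X},\overline{Y})$ from Lemma \ref{B1}(c), and $g=\hbar+\ell\otimes\ell$.

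For (a), write $\widetilde F^2=F^2\phi(s)^2$ with $s=\mathfrak{B}/F$. The vertical derivative of $s$ is
$$D^\circ_{\gamma\overline{X}}s=\frac{1}{F}\bigl(\mathbf{B}(\overline{X})-s\,\ell(\overline{X})\bigr).$$
The first derivative is
$$\tfrac12 D^\circ_{\gamma\overline{Y}}\widetilde F^2=\phi(\phi-s\phi')\,F\ell(\overline{Y})+F\phi\phi'\,\mathbf{B}(\overline{Y}).$$
Differentiating again along $\gamma\overline{X}$ gives terms of four types: $\hbar$ (equivalently $g-\ell\otimes\ell$), $\ell\otimes\ell$, $\mathbf{B}\otimes\mathbf{B}$, and the symmetric combination $\mathbf{B}\otimes\ell+\ell\otimes\mathbf{B}$. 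The coefficients are exactly the $(\alpha,\beta)$ coefficients from \eqref{gen metric}, namely $\rho_0=\phi(\phi-s\phi')$, $\rho_1$ and $\rho_2$, with $F$ in place of $\alpha$. Substituting $\phi=s^2-s-1$ gives the values already found in the proof of \eqref{golden alpha,beta metric}:
\begin{itemize}
\item $\rho_0=1+s+s^3-s^4=C_1$;
\item $\rho_1=6s^2-6s-1=C_3$;
\item $\rho_2=1+3s^2-4s^3=C_2$.
\end{itemize}
The $\ell\otimes\ell$ coefficient is $-sC_2=-\frac{\mathfrak{B}}{F}C_2$. One should check that the $\hbar$-term recombines into $C_1g$ with no leftover $\ell\otimes\ell$ piece. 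This works because $\rho_0+\rho_1s^2-2s\rho_2+s\rho_2=\phi^2$ is consistent with $\widetilde g(\overline\eta,\overline\eta)=\widetilde F^2$, which also serves as a sanity check. Finally, multiply numerators and denominators by powers of $F$ to obtain the stated forms of $C_1,C_2,C_3$ in terms of $F$ and $\mathfrak{B}$.

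For (b), use $2\widetilde{\mathbf{T}}(\overline{X},\overline{Y},\overline{Z})=(D^\circ_{\gamma\overline{Z}}\widetilde g)(\overline{X},\overline{Y})$, which comes from the metricity formula for the Berwald connection together with Lemma \ref{B}(i). Apply the Leibniz rule to each of the four terms in (a):
\begin{itemize}
\item $D^\circ_{\gamma\overline{Z}}g=2\mathbf{T}$, which yields $2C_1\mathbf{T}$.
\item $D^\circ_{\gamma\overline{Z}}\mathbf{B}=0$, so the $\mathbf{B}\otimes\mathbf{B}$ term contributes only the derivative of $C_3$.
\item $D^\circ_{\gamma\overline{Z}}\ell=F^{-1}\hbar(\cdot,\overline{Z})$. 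This produces the $\hbar\,\ell$ terms with coefficient $-\frac{\mathfrak{B}C_2}{F^2}$ and the $\mathbf{B}\,\hbar$ terms with coefficient $\frac{C_2}{F}$.
\item The derivatives of the scalar coefficients are left unexpanded, as in the statement.
\end{itemize}

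The main obstacle is the bookkeeping in (a): collecting the second-derivative terms and confirming that the $\ell\otimes\ell$ coefficient comes out as $-\frac{\mathfrak B}{F}C_2$ after rewriting $\hbar=g-\ell\otimes\ell$. I would handle it by matching term by term with the already verified local formula \eqref{gen metric}.
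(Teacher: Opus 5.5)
Your proposal is correct and follows essentially the paper's route: the paper computes $\widetilde\ell$ by the chain rule and then forms $\widetilde g=\widetilde\hbar+\widetilde\ell\otimes\widetilde\ell$ with $\widetilde\hbar=\widetilde F\,D^\circ_{\gamma\overline{X}}\widetilde\ell$. That is just your differentiation of $\widetilde F\widetilde\ell$ split by the product rule, and part (b) is the same Leibniz-rule computation. One small slip in your sanity check: the identity should read $\rho_0+s^2\rho_1+2s\rho_2-s\rho_2=\phi^2$, since the cross term contributes $+2s\rho_2$ and the $\ell\otimes\ell$ term contributes $-s\rho_2$; with the signs you wrote it fails, e.g.\ at $s=2$.
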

\begin{proof} 
\begin{description}
    \item[(a)] The fundamental Finsler metric tensor $\widetilde{g}$ of the golden Finsler metric $\widetilde{F}$ \eqref{change} can be obtained from the relation $\widetilde{g} = \widetilde{\hbar} + \widetilde{\ell} \otimes \widetilde{\ell}$.\\
    Now, from Lemma \ref{B}, and the facts $\rho \circ \gamma = 0$ and $\rho \circ \beta = \rho \circ \widetilde{\beta} = Id$, it follows that
    \begin{align}\label{ell}
    \widetilde{\ell}(\overline{X}) &= d_J \widetilde{F}(\widetilde{\beta} \overline{X}) = d_J \widetilde{F}({\beta} \overline{X}) = \frac{\partial \widetilde{F}}{\partial F}\, d_J F(\beta \overline{X}) + \frac{\partial \widetilde{F}}{\partial \mathfrak{B}} \, d_J \mathfrak{B}(\beta \overline{X}) \nonumber \\
    &= \frac{-\mathfrak{B}^2-F^2}{F^2} \,\ell(\overline{X}) + \frac{2 \mathfrak{B}-F}{F}\, \mathbf{B}(\overline{X}).
    \end{align}
    Then, applying \eqref{ell}, Lemma \ref{B}, and Lemma \ref{B1}, one can show that
    \begin{align*}
    \widetilde{\hbar}(\overline{X},\overline{Y}) &= \widetilde{F}({{\widetilde{D}}}^{\circ}_{\gamma \overline{X}} \,\widetilde{\ell})(\overline{Y}) = \widetilde{F}({{D}}^{\circ}_{\gamma \overline{X}} \,\widetilde{\ell})(\overline{Y}) \\
    &= \widetilde{F}\, {{D}}^{\circ}_{\gamma \overline{X}} \left\{\frac{-\mathfrak{B}^2-F^2}{F^2} \,\ell(\overline{Y}) + \frac{2 \mathfrak{B}-F}{F}\, \mathbf{B}(\overline{Y})\right\} \\
    &= \widetilde{F}\, \left\{\left({{D}}^{\circ}_{\gamma \overline{X}}\left(\frac{-\mathfrak{B}^2-F^2}{F^2}\right)\right)\,\ell(\overline{Y}) + \left({{D}}^{\circ}_{\gamma \overline{X}}\left(\frac{2 \mathfrak{B}-F}{F}\right)\right)\, \mathbf{B}(\overline{Y})\right\} \\
    &\quad + \widetilde{F}\,\left\{\left(\frac{-\mathfrak{B}^2-F^2}{F^2}\right)\,({{D}}^{\circ}_{\gamma \overline{X}}\ell)(\overline{Y}) + \left(\frac{2 \mathfrak{B}-F}{F}\right)\, ({{D}}^{\circ}_{\gamma \overline{X}}\mathbf{B})(\overline{Y})\right\} \\
    &= \left(\frac{\mathfrak{B}^2-\mathfrak{B} F-F^2}{F}\right)\, \Bigg{\{} \left(\frac{2\mathfrak{B}^2}{F^3}\,\ell(\overline{X}) - \frac{2 \mathfrak{B}}{F^2}\, \mathbf{B}(\overline{X})\right)\,\ell(\overline{Y}) \\
    &\quad + \left(-\frac{2 \mathfrak{B}}{F^2}\,\ell(\overline{X}) + \frac{2}{F}\, \mathbf{B}(\overline{X})\right)\, \mathbf{B}(\overline{Y}) \Bigg{\}} \\
    &\quad + \left(\frac{\mathfrak{B}^2-\mathfrak{B} F-F^2}{F}\right)\,\left(\frac{-\mathfrak{B}^2-F^2}{F^2}\right)\,F^{-1} \,\hbar(\overline{X},\overline{Y}).
    \end{align*}
    Hence, the fundamental Finsler metric tensor $\widetilde{g}$ is obtained by 
    \begin{align*}
    \widetilde{g}(\overline{X},\overline{Y}) &= \widetilde{\hbar}(\overline{X},\overline{Y}) + (\widetilde{\ell} \otimes \widetilde{\ell}) (\overline{X},\overline{Y}) \\ 
    &= \frac{\left(\mathfrak{B}^2+F^2\right) \left(-\mathfrak{B}^2+\mathfrak{B} F+F^2\right)}{F^4}\hbar(\overline{X},\overline{Y}) \\ 
    &\quad + \frac{2 \left(\mathfrak{B}^2-\mathfrak{B} F-F^2\right)}{F^2}\,\mathbf{B}(\overline{X})\mathbf{B}(\overline{Y}) \\
    &\quad + \frac{2 \mathfrak{B}^2 \left(\mathfrak{B}^2-\mathfrak{B} F-F^2\right)}{F^4}\,\ell(\overline{X})\,\ell(\overline{Y}) \\
    &\quad - \frac{2 \mathfrak{B} \left(\mathfrak{B}^2-\mathfrak{B} F-F^2\right)}{F^3}\,\left\{\mathbf{B}(\overline{X})\,\ell(\overline{Y})+\mathbf{B}(\overline{Y})\,\ell(\overline{X})\right\} \\ 
    &\quad + \left(\frac{\mathfrak{B}^2+F^2}{F^2} \,\ell(\overline{X}) - \frac{2 \mathfrak{B}-F}{F}\, \mathbf{B}(\overline{X})\right)\\
    & \qquad \times \left(\frac{\mathfrak{B}^2+F^2}{F^2} \,\ell(\overline{Y}) - \frac{2 \mathfrak{B}-F}{F}\, \mathbf{B}(\overline{Y})\right) \\ 
    &= C_1(F, \mathfrak{B})\,g(\overline{X},\overline{Y}) + C_3(F, \mathfrak{B})\,\mathbf{B}(\overline{X})\,\mathbf{B}(\overline{Y}) \\
    &\quad - \frac{\mathfrak{B}}{F} C_2(F, \mathfrak{B})\,\ell(\overline{X})\,\ell(\overline{Y}) \\
    &\quad + C_2(F, \mathfrak{B})\,\left\{\mathbf{B}(\overline{X})\,\ell(\overline{Y})+\mathbf{B}(\overline{Y})\,\ell(\overline{X})\right\}.
    \end{align*}
    
    \item[(b)] To derive the Cartan tensor $\widetilde{\mathbf{T}}$, we use the simplified expression of the fundamental metric $\widetilde{g}$ obtained above and apply the identity $2\widetilde{\mathbf{T}}(\overline{X},\overline{Y},\overline{Z}) = ({D^\circ_{\gamma \overline{Z}}}\widetilde{g})(\overline{X},\overline{Y})$. By distributing the covariant derivative over the terms, applying the Leibniz rule, and recognizing the base relations $({D^\circ_{\gamma \overline{Z}}}g)(\overline{X},\overline{Y}) = 2\mathbf{T}(\overline{X},\overline{Y},\overline{Z})$ and $(D^\circ_{\gamma \overline{Z}}\ell)(\overline{Y}) = \frac{1}{F}\hbar (\overline{Z}, \overline{Y})$, the expression collapses naturally into the stated formula.
\end{description}
\end{proof}
\begin{thm}
The fundamental golden Finsler metric tensor $\widetilde{g}$ of $\widetilde{F}$ has maximal rank if and only if
\begin{equation}
\label{Eq:g_tilde_non_degenerate}
F^2(1-2A^2)+3\mathfrak{B}^2 \neq 0.
\end{equation}
\end{thm}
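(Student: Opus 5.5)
Throughout, I read $A^2$ in \eqref{Eq:g_tilde_non_degenerate} as $A^2:=g(\overline{A},\overline{A})=\mathbf{B}(\overline{A})$, the global counterpart of $b^2=\|\beta\|_\alpha^2$, and I work on the region where $\phi(s)\neq 0$, i.e.\ $\widetilde{F}\neq 0$. The plan is to compute $\det\widetilde{g}$ relative to $\det g$ using the formula of Proposition \ref{hh1}(a). That formula exhibits $\widetilde{g}$ as $C_1\,g$ plus a perturbation of rank at most two, living in the span of the $\pi$-forms $\ell$ and $\mathbf{B}$.

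Fix a point $u\in\T M$ and raise indices with $g$. The $g$-dual of $\ell$ is $\overline{\eta}/F$ and the $g$-dual of $\mathbf{B}$ is $\overline{A}$. With $s=\mathfrak{B}/F$, the relevant $2\times 2$ Gram matrix is
$$G=\begin{pmatrix} g(\overline{\eta}/F,\overline{\eta}/F) & g(\overline{\eta}/F,\overline{A})\\ g(\overline{A},\overline{\eta}/F) & g(\overline{A},\overline{A})\end{pmatrix}=\begin{pmatrix}1 & s\\ s & A^2\end{pmatrix}.$$
This uses $\ell(\overline{\eta})=F$, $\mathbf{B}(\overline{\eta})=\mathfrak{B}$ and $\mathbf{B}(\overline{A})=A^2$.

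The perturbation has coefficient matrix
$$K=\begin{pmatrix}-sC_2 & C_2\\ C_2 & C_3\end{pmatrix},$$
where, in terms of $s$, $C_1=\phi(s)\,(-(s^2+1))=\phi(\phi-s\phi')$, $C_2=1+3s^2-4s^3$ and $C_3=6s^2-6s-1$. These are exactly $\rho_0,\rho_2,\rho_1$ of the local computation. The matrix determinant lemma then gives
$$\det\widetilde{g}=C_1^{\,n-2}\,\det\!\left(C_1 I_2+KG\right)\,\det g,$$
where $\det g\neq 0$ because $F$ is a Finsler metric. This is valid when $\ell$ and $\mathbf{B}$ are independent at $u$; the degenerate case, where $\overline{A}$ is proportional to $\overline{\eta}$, is handled by the same formula read as a limit, or directly.

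The main step is the $2\times2$ algebra: showing that $\det(C_1I_2+KG)$ factors as
$$\phi^{3}\,\bigl(\phi-s\phi'+(A^2-s^2)\phi''\bigr)=\phi^3\,(2A^2-3s^2-1),$$
so that the total factorisation is
$$\det\widetilde{g}=\phi^{n+1}(\phi-s\phi')^{n-2}\bigl(\phi-s\phi'+(A^2-s^2)\phi''\bigr)\det g.$$
This is precisely formula \eqref{determinant} with $b^2$ replaced by $A^2$. That is expected, since the computation uses only the algebraic structure $g,\ell,\mathbf{B}$, with $\ell(\overline{\eta})=F$ and $\mathbf{B}(\overline{A})=A^2$. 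I would verify it by expanding the trace and determinant of $KG$ as polynomials in $s$ and $A^2$, checking against \eqref{determinant of golden}. Alternatively, I would argue that the identity holds for all $(\alpha,\beta)$ data and is purely algebraic, so it transfers verbatim. This bookkeeping is the part I expect to be most error-prone.

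To conclude, note that $\phi-s\phi'=-(s^2+1)$ never vanishes and $\phi(s)\neq0$ on the domain of $\widetilde{F}$. Hence $\widetilde{g}$ has maximal rank if and only if $2A^2-3s^2-1\neq0$. Multiplying by $-F^2$, this is $F^2(1-2A^2)+3\mathfrak{B}^2\neq0$, which is exactly \eqref{Eq:g_tilde_non_degenerate}.
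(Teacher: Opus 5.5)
Your proof is correct, and it reaches the same $2\times2$ core as the paper by a different mechanism.

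\textbf{The paper's route.} The paper argues via the kernel. It assumes $\widetilde{g}(\overline{W},\overline{Z})=0$ for all $\overline{W}$ and tests with $\overline{W}=\overline{A}$ and $\overline{W}=\overline{\eta}$. This gives a linear system in $\ell(\overline{Z})$ and $\mathbf{B}(\overline{Z})$. That system is exactly $C_1I_2+GK$ with its rows swapped and one row multiplied by $F$, so its determinant is $-F\det(C_1I_2+KG)$. When this determinant is nonzero, the paper deduces $\ell(\overline{Z})=\mathbf{B}(\overline{Z})=0$, and then $\overline{Z}=0$ from $C_1\neq0$. This route needs no determinant machinery and stays inside the $\pi$-form calculus. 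However, its converse is only asserted. Vanishing of the $2\times2$ determinant gives a nonzero candidate pair $(\ell(\overline{Z}),\mathbf{B}(\overline{Z}))$, but one still has to produce a nonzero $\overline{Z}$ in the kernel of $\widetilde{g}$.

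\textbf{What your route buys.} The matrix determinant lemma gives both directions at once. It also yields the closed formula $\det\widetilde{g}=\phi^{n+1}(\phi-s\phi')^{n-2}\bigl(\phi-s\phi'+(A^2-s^2)\phi''\bigr)\det g$, which is the global analogue of \eqref{determinant}.

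\textbf{Two small remarks.} First, your independence caveat is unnecessary. Write the perturbation as $UKU^{T}$; the identity $\det(I_n+XY)=\det(I_2+YX)$ holds for all $n\times 2$ and $2\times n$ matrices. So the formula holds regardless of whether $\ell$ and $\mathbf{B}$ are independent.

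Second, the step you flagged as error-prone is short:
\[
\det(C_1I_2+KG)=C_1\,(C_1+sC_2+A^2C_3)-(A^2-s^2)\,C_2\,(C_2+sC_3).
\]
Now use $C_2+sC_3=\phi\phi'$, $C_1+sC_2=\phi^2-s^2C_3$, and $C_1C_3-\phi\phi'C_2=\phi^2(C_3-\phi'^2)=\phi^3\phi''$. The determinant becomes $\phi^2C_1+(A^2-s^2)\phi^3\phi''=\phi^3\bigl(\phi-s\phi'+(A^2-s^2)\phi''\bigr)$, as you claimed.
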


\begin{proof}
Let $\widetilde{g}$, given in Proposition \ref{hh1}(a), be the Finsler metric tensor associated with the golden Finsler metric $\widetilde{F}$. Suppose that $\widetilde{g}(\overline{W}, \overline{Z}) = 0$ for all $\overline{W}\in\cp$. 

That is, utilizing the substitution constants $C_1, C_2,$ and $C_3$ from Proposition \ref{hh1}, we have:
\begin{align*}
0 &= C_1(F,\mathfrak{B})\,g(\overline{W},\overline{Z})
+ C_3(F, \mathfrak{B})\,\mathbf{B}(\overline{W})\,\mathbf{B}(\overline{Z}) -\frac{\mathfrak{B}}{F}C_2(F, \mathfrak{B})\,\ell(\overline{W})\,\ell(\overline{Z}) \\
&\quad +C_2(F, \mathfrak{B})\,\left\{\mathbf{B}(\overline{W})\,\ell(\overline{Z})+\mathbf{B}(\overline{Z})\,\ell(\overline{W})\right\}.
\end{align*}
Thus, by substituting $\overline{W} = \overline{A}$, and using the identities $g(\overline{A},\overline{Z}) = \mathbf{B}(\overline{Z})$, $\ell(\overline{A}) = \frac{\mathfrak{B}}{F}$, and $\mathbf{B}(\overline{A}) = g(\overline{A},\overline{A}) := A^2$, we get
\begin{equation}\label{12q}
\zeta_1 \,\ell(\overline{Z}) + \zeta_2\, \mathbf{B}({\overline{Z}}) = 0,
\end{equation}
where
\begin{align*}
\zeta_1 &:= C_2(F, \mathfrak{B})\left(A^2 - \frac{\mathfrak{B}^2}{F^2}\right) = \frac{\left(-4 \mathfrak{B}^3+3 \mathfrak{B}^2 F+F^3\right) \left(F^2 A^2-\mathfrak{B}^2\right)}{F^5}, \\
\zeta_2 &:= C_1(F, \mathfrak{B}) + A^2 C_3(F, \mathfrak{B}) + \frac{\mathfrak{B}}{F}C_2(F, \mathfrak{B}) \\
&= \frac{-5 \mathfrak{B}^4+4 \mathfrak{B}^3 F-F^2 A^2 \left(-6 \mathfrak{B}^2+6 \mathfrak{B} F+F^2\right)+2 \mathfrak{B} F^3+F^4}{F^4}.
\end{align*}

Similarly, substituting $\overline{W} = \overline{\eta}$, along with using $g(\overline{\eta}, \overline{Z}) = F \ell(\overline{Z})$, $\ell(\overline{\eta}) = F$, and $\mathbf{B}(\overline{\eta}) = \mathfrak{B}$, we obtain
\begin{equation}\label{13q}
\zeta_3\, \ell(\overline{Z}) + \zeta_4\, \mathbf{B}({\overline{Z}}) = 0,
\end{equation}
with
\begin{align*}
\zeta_3 &:= F\,C_1(F, \mathfrak{B}) = \frac{-\mathfrak{B}^4+\mathfrak{B}^3 F+\mathfrak{B} F^3+F^4}{F^3}, \\
\zeta_4 &:= \mathfrak{B}\,C_3(F, \mathfrak{B}) + F\,C_2(F, \mathfrak{B}) = \frac{2 \mathfrak{B}^3-3 \mathfrak{B}^2 F-\mathfrak{B} F^2+F^3}{F^2}.
\end{align*}

Hence, the system of linear equations \eqref{12q} and \eqref{13q} in terms of $\ell(\overline{Z})$ and $\mathbf{B}({\overline{Z}})$ has a non-trivial solution if and only if its determinant vanishes ($\zeta_1 \zeta_4 - \zeta_2 \zeta_3 = 0$), which yields:
$$
\frac{\left(\mathfrak{B}^2-\mathfrak{B} F-F^2\right)^3 \left(3 \mathfrak{B}^2+F^2 \left(1-2 A^2\right)\right)}{F^7} = 0.
$$
Notice that $\mathfrak{B}^2-\mathfrak{B} F-F^2 = F^2\left(\left(\frac{\mathfrak{B}}{F}\right)^2 - \frac{\mathfrak{B}}{F} - 1\right) = F \widetilde{F}$. As $\widetilde{F} \neq 0$ over $\T M$, this factor is strictly non-vanishing. Thus, we conclude that
$$
F^2(1-2A^2)+3\mathfrak{B}^2 = 0.
$$
Therefore, the system has only the trivial solution $\ell(\overline{Z}) = \mathbf{B}({\overline{Z}}) = 0$ if and only if the base Finsler structure $F$ and the $\pi$-form $\mathfrak{B}$ satisfy the condition
$$
F^2(1-2A^2)+3\mathfrak{B}^2 \neq 0.
$$
Under this condition, substituting $\ell(\overline{Z}) = \mathbf{B}({\overline{Z}}) = 0$ back into the initial equation $\widetilde{g}(\overline{W}, \overline{Z}) = 0$ leaves only $C_1(F, \mathfrak{B})g(\overline{W}, \overline{Z}) = 0$. Because $C_1 \neq 0$ and the base metric $g$ is non-degenerate, one immediately deduces that $\overline{Z} = 0$. Therefore, $\widetilde{g}$ is non-degenerate (i.e., has maximal rank) if and only if condition \eqref{Eq:g_tilde_non_degenerate} is satisfied. 
\end{proof}

\begin{rem}
From now on, we consider the golden Finsler metric $\widetilde{F}$ defined by \eqref{change} to satisfy condition \eqref{Eq:g_tilde_non_degenerate}.
\end{rem}
\subsection{Sprays and Connections of Golden Metrics}
To avoid complicated formulas and to determine the geodesic spray of the golden Finsler metric, we restrict our further investigation to a special $1$-form, namely, a concurrent $\pi$-vector field.  
A non-vanishing $\pi$-vector field $\overline{A}$ is called a concurrent $\pi$-vector field if it satisfies \cite{r94a} for all $\overline{Z} \in \cp$:
\begin{equation}\label{concurrent}
    \nabla_{\beta \overline{Z}}\,\overline{A} = - \overline{Z} = {{D}}^{\circ}_{\beta \overline{Z}}\,\overline{A}, \qquad
    \nabla_{\gamma \overline{Z}}\,\overline{A} = 0 = {{D}}^{\circ}_{\gamma \overline{Z}}\,\overline{A}.
\end{equation}
Moreover, if $\mathbf{B}$ is the $\pi$-form associated with a concurrent $\pi$-vector field $\overline{A}$ (that is, $\mathbf{B}=i_{\overline{A}}\,g$), then $\mathbf{B}$ satisfies for all $\overline{Z},\, \overline{W} \in \cp$:
$$ 
(\nabla_{\beta \overline{Z}}\mathbf{B})(\overline{W}) = -g(\overline{Z},\overline{W}) = ({{D}}^{\circ}_{\beta \overline{Z}}\mathbf{B})(\overline{W}), \qquad (\nabla_{\gamma\overline{Z}}\mathbf{B})(\overline{W}) = 0 = ({{D}}^{\circ}_{\gamma \overline{Z}}\mathbf{B})(\overline{W}).
$$

\begin{rem}
\begin{itemize}
    \item[$\bullet$] If the $\pi$-vector field $\overline{A}$ associated with the given scalar $\pi$-form $\mathbf{B}$ is concurrent over $(M,F)$, then $\widetilde{F}$ will be called a special golden Finsler metric.
    \item[$\bullet$] It is known that a concurrent $\pi$-vector field $\overline{A}$ and its associated $\pi$-form $\textbf{B}$ are independent of the directional argument $y$ \cite{r94a}.
\end{itemize}
\end{rem}

We now determine the relationship between the geodesic spray $\widetilde{G}$ of the special golden Finsler metric $\widetilde{F}$ and the geodesic spray $G$ of $F$. 

\begin{thm}\label{th.22} 
The canonical spray $\widetilde{G}$ of the special golden Finsler metric can be expressed as
\begin{equation}\label{Eq:canonical_spray}
\widetilde{G} = G - \mathcal{E}_1 \,\mathcal{C} - \mathcal{E}_2 \,\gamma \overline{A},
\end{equation}
where $\mathcal{E}_1$ and $\mathcal{E}_2$ are smooth functions on $TM$ defined by 
\begin{equation*}
\mathcal{E}_1 := \frac{F^2 \left(F^3 + 3 F \mathfrak{B}^2 - 4 \mathfrak{B}^3\right)}{\left(\mathfrak{B}^2-\mathfrak{B} F-F^2\right)\mathcal{Z}}, \qquad \mathcal{E}_2 := \frac{2 F^4}{\mathcal{Z}},
\end{equation*}
with $\mathcal{Z} := F^2 \left(1-2 A^2\right) + 3 \mathfrak{B}^2$.
\end{thm}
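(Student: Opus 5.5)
We characterize $\widetilde{G}$ as the unique spray satisfying $i_{\widetilde{G}}\,dd_J\widetilde{F}^2=-d\widetilde{F}^2$, and write it in the form $\widetilde{G}=G-\gamma\overline{V}$ for some $\pi$-vector field $\overline{V}$. This is possible because $J\widetilde{G}=JG=\mathcal{C}$, so the difference $G-\widetilde{G}$ is vertical. Homogeneity of the two sprays, $[\mathcal{C},\widetilde G]=\widetilde G$, forces $\overline{V}$ to be homogeneous of degree $2$. Our goal is to show that $\overline{V}$ lies in the span of $\overline{\eta}$ and $\overline{A}$, and to find its coefficients.

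\textbf{Reduction to a metric equation.} We use the standard identity obtained by evaluating $i_{\widetilde G}dd_J\widetilde F^2=-d\widetilde F^2$ on $\gamma\overline{Z}$ and on $\beta\overline{Z}$. The vertical evaluation holds automatically by homogeneity. The horizontal evaluation (with respect to the Berwald horizontal map $\beta$ of $F$) turns the spray equation into
\[
2\,\widetilde{g}(\overline{V},\overline{Z}) = \tfrac12\Big(d(\widetilde F^2)(\beta\overline Z)-\,d\big(d\widetilde F^2(G)\big)(\gamma\overline Z)\Big)
\]
or an equivalent Euler--Lagrange type form. Concretely, $\overline{V}$ must solve $\widetilde{g}(\overline{V},\overline{Z})=\Lambda(\overline{Z})$, where $\Lambda$ is a $\pi$-form built from the horizontal derivatives of $\widetilde F^2$.

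\textbf{Computing the right-hand side.} Since $\widetilde F^2$ depends only on $F$ and $\mathfrak B$, and $d_hF=0$ by Lemma \ref{B1}(b), only derivatives of $\mathfrak B$ enter. Concurrency gives $D^\circ_{\beta\overline Z}\mathfrak B=F\ell(D^\circ_{\beta\overline Z}\overline A)=-F\ell(\overline Z)$ and $d\mathfrak B(G)=-F\ell(\overline\eta)=-F^2$. The vertical derivatives come from Lemma \ref{B}(ii)(a) and Lemma \ref{B1}(a), and $D^\circ_{\gamma}\overline A=0$. After these substitutions, $\Lambda(\overline Z)=\lambda_1\ell(\overline Z)+\lambda_2\mathbf B(\overline Z)$, where $\lambda_1,\lambda_2$ are explicit rational functions of $F$ and $\mathfrak B$ involving $\phi,\phi'$ (for example, $\lambda$-terms proportional to $\widetilde F\,\partial_{\mathfrak B}\widetilde F$ and its derivative).

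\textbf{Solving the linear system.} We take the ansatz $\overline V=\mathcal E_1\overline\eta+\mathcal E_2\overline A$ and insert it into Proposition \ref{hh1}(a), using $g(\overline\eta,\cdot)=F\ell$, $g(\overline A,\cdot)=\mathbf B$, $\ell(\overline A)=\mathfrak B/F$ and $\mathbf B(\overline A)=A^2$. This is exactly the bookkeeping already carried out in the proof of the maximal-rank theorem: $\widetilde g(\overline\eta,\cdot)=\zeta_3\ell+\zeta_4\mathbf B$ and $\widetilde g(\overline A,\cdot)=\zeta_1\ell+\zeta_2\mathbf B$. Matching the coefficients of $\ell$ and $\mathbf B$ gives a $2\times2$ system whose determinant equals $(\mathfrak B^2-\mathfrak BF-F^2)^3\mathcal Z/F^7$. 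This determinant is nonzero by condition \eqref{Eq:g_tilde_non_degenerate} and $\widetilde F\ne0$. Applying Cramer's rule then yields $\mathcal E_1$ and $\mathcal E_2$, and uniqueness follows from nondegeneracy of $\widetilde g$.

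The main obstacle is the algebraic simplification. The raw Cramer quotients should factor through powers of $\mathfrak B^2-\mathfrak BF-F^2=F\widetilde F$ so that the results collapse to $\mathcal E_2=2F^4/\mathcal Z$ and the stated $\mathcal E_1$. As sanity checks we will verify that $\mathcal E_1\overline\eta+\mathcal E_2\overline A$ has the correct degree-$2$ homogeneity and that $d\widetilde F(\widetilde G)=0$. Because $\widetilde F$ is a function of $F$ and $\mathfrak B$, the latter reduces to the scalar identity $\phi'(s)\,(-F^2)\cdot(\ldots)$ balancing the $\mathcal E$-terms, and this can be checked directly.
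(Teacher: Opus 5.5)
Your architecture is the paper's. You write $\widetilde G-G$ as a vertical field, reduce $i_{\widetilde G}dd_J\widetilde F^2=-d\widetilde F^2$ to an equation $\widetilde g(\overline V,\cdot)=\Lambda$ with $\Lambda\in\mathrm{span}\{\ell,\mathbf B\}$, and solve a $2\times 2$ system whose determinant is, up to sign, $(\mathfrak B^2-\mathfrak BF-F^2)^3\mathcal Z/F^7$. Your ansatz $\overline V=\mathcal E_1\overline\eta+\mathcal E_2\overline A$, with uniqueness from the non-degeneracy of $\widetilde g$ and reuse of $\zeta_1,\dots,\zeta_4$, is a valid and somewhat tidier alternative to the paper's route. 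The paper instead inverts $g$ to place $\overline V$ in $\mathrm{span}\{\overline\eta,\overline A\}$ with coefficients depending on $\ell(\overline V)$ and $\mathbf B(\overline V)$, and then solves for those. The genuine gap is the one step that actually determines $\mathcal E_1$ and $\mathcal E_2$: your displayed reduction is incorrect, and $\lambda_1,\lambda_2$ are never computed. So the proposal only shows $\overline V\in\mathrm{span}\{\overline\eta,\overline A\}$. The check $d\widetilde F(\widetilde G)=0$ is a single scalar relation and cannot fix two unknowns.

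Concretely, take $\widetilde G=G-\gamma\overline V$ and $\tfrac12\, i_{\gamma\overline V}dd_J\widetilde F^2(X)=\widetilde g(\overline V,\rho X)$. Evaluating at $X=\beta\overline Z$ gives
\[
\widetilde g(\overline V,\overline Z)=\tfrac12\, dd_J\widetilde F^2(G,\beta\overline Z)+\tfrac12\, d\widetilde F^2(\beta\overline Z)=\tfrac12\, d\big(d\widetilde F^2(G)\big)(\gamma\overline Z)-d\widetilde F^2(\beta\overline Z).
\]
In coordinates this is $\widetilde g_{ij}V^j=(L_{y^i})_{|0}-L_{|i}$ with $L=\widetilde F^2/2$. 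Your formula has the wrong overall sign and the wrong relative weight between the two terms.

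Write $\phi=\phi(s)$ with $s=\mathfrak B/F$. Then $d\widetilde F^2(G)=-2F^3\phi\phi'$ and $d\widetilde F^2(\beta\overline Z)=-2F^2\phi\phi'\,\ell(\overline Z)$. With these, your version yields $\Lambda=\tfrac{F^2}{2}\{(\phi\phi'+C_2)\,\ell+C_3\,\mathbf B\}$, which leads to wrong coefficients. The correct relation gives $\Lambda=-F^2\{C_2\,\ell+C_3\,\mathbf B\}$. This is the same relation the paper reaches, with its opposite sign convention $\widetilde G=G+\gamma\overline V$.

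With this $\Lambda$ your Cramer step does collapse. Use $\zeta_3=FC_1$, $\zeta_4=F\phi\phi'$ and $z:=2A^2-3s^2-1=-\mathcal Z/F^2$. One checks
\[
\zeta_2\zeta_3-\zeta_1\zeta_4=F\phi^3 z,\qquad C_2\zeta_2-C_3\zeta_1=\phi^2 C_2,\qquad C_3\zeta_3-C_2\zeta_4=2F\phi^3.
\]
Hence $\mathcal E_1=-FC_2/(\phi z)$ and $\mathcal E_2=-2F^2/z$, which are exactly the stated expressions. Supplying this computation closes the gap.
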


\begin{proof} 
Let $(M,F)$ be a Finsler manifold. Let $\widetilde{F}$ be the special golden Finsler metric \eqref{change} with a concurrent $\pi$-vector field $\overline{A}$ and its concurrent scalar $\pi$-form $\mathbf{B}$ over $(M,F)$. Thus, the exterior $\pi$-form of $\widetilde{F}$ is defined by $\widetilde{\Omega} = \frac{1}{2}dd_{{J}}\,\widetilde{F}^2$. As the difference between the two geodesic sprays of $F$ and $\widetilde{F}$ is a vertical vector field (i.e., $\widetilde{G} = G+\gamma \overline{V}$, for some $\pi$-vector field $\overline{V}$), we get
\begin{equation}\label{ch5eq}
-\frac{1}{2}\,d \widetilde{F}^2(X) = i_{\widetilde{G}}\,\widetilde{\Omega}(X) = \frac{1}{2}i_{G}\,dd_{J}\widetilde{F}^2(X) + \frac{1}{2}i_{\gamma \overline{V}}\,dd_{J}\widetilde{F}^2(X).
\end{equation}
Using Lemmas \ref{B1} and \ref{B} together with 
$$
\beta \overline{\eta} = G, \quad X = hX+vX = \beta\rho X+\gamma K X, \quad d \mathfrak{B}(G) = -F^2,
$$ 
we obtain
\begin{align*}
\frac{1}{2} d\widetilde{F}^2(X) &= \widetilde{F}\,d\widetilde{F}(X) \\
&= \left(\frac{\mathfrak{B}^2-\mathfrak{B} F-F^2}{F}\right) \set{\left(\frac{-\mathfrak{B}^2-F^2}{F^2}\right)\,dF(X)+\left(\frac{2 \mathfrak{B}-F}{F}\right)\, d\mathfrak{B}(X)} \\
&= \frac{-\mathfrak{B}^4+\mathfrak{B}^3 F+\mathfrak{B} F^3+F^4}{F^3}\,dF(X) + \frac{2 \mathfrak{B}^3-3 \mathfrak{B}^2 F-\mathfrak{B} F^2+F^3}{F^2}\, d\mathfrak{B}(X).
\end{align*}
Also, we have
\begin{align*}
\frac{1}{2}\,i_{G}\,dd_{J}\widetilde{F}^{2}(X) &= \frac{1}{2}\{dd_{J}\widetilde{F}^{2}(\beta \overline{\eta} ,X)\} \\
&= \frac{1}{2} \set{G \cdot d_J\widetilde{F}^{2}(X) - X \cdot d_J\widetilde{F}^{2}(G)-d_J\widetilde{F}^{2}[G,X]} \\
&= \frac{1}{2} \set{G \cdot (2\widetilde{F} \widetilde{\ell}(\rho X)) - X \cdot(2 \widetilde{F} \widetilde{\ell}(\overline{\eta})) - 2\widetilde{F}\widetilde{\ell}(\rho[G,X])} \\
&= (G \cdot \widetilde{F})\,\widetilde{\ell}(\rho X) + \widetilde{F} (G \cdot \widetilde{\ell}(\rho X)) - (X \cdot \widetilde{F}^2) - \widetilde{F}\,\widetilde{\ell}(\rho[G,X]) \\
&= d\widetilde{F}(G)\,\widetilde{\ell}(\rho X) + \widetilde{F} (G \cdot \widetilde{\ell}(\rho X)) - 2\widetilde{F}(X \cdot \widetilde{F}) - \widetilde{F}\,\widetilde{\ell}(\rho[G,X]) \\
&= (F^2-2 \mathfrak{B}F)\,\widetilde{\ell}(\rho X) \\
&\quad + \widetilde{F}\left( G \cdot \widetilde{\ell}(\rho X) + 2 \left(\frac{\mathfrak{B}^2+F^2}{F^2}\right)\,dF(X) \right). \\
& \quad +\widetilde{F} \left(\frac{2 \mathfrak{B}-F}{F}\right)\, d\mathfrak{B}(X) +\widetilde{F}\, \widetilde{\ell}(\rho[G,X]).
\end{align*}
Using Lemma \ref{B} together with the following relations:
\begin{align*}
\rho[G,X] &= \rho[G,hX+vX] = D^{\circ}_G \rho X-K X, \\
(D^{\circ}_{G} \,\mathbf{B})(\overline{X}) &= -g(\overline{X},\overline{\eta}) = -F\,\ell(\overline{X}), \quad (D^{\circ}_{G}\, \ell)(\overline{X}) = (\nabla_{G}\, \ell)(\overline{X}) = 0, \\
d\mathfrak{B}(X) &= \mathbf{B}(K X) - F\ell({\rho X}), \quad dF(X) = dF(\gamma K X) = \ell(K X),
\end{align*}
we get {\small{
\begin{align*}
\frac{1}{2}\,i_{G}\,dd_{J}\widetilde{F}^{2}(X) &= (F^2-2 \mathfrak{B}F)\,\left(\left(\frac{-\mathfrak{B}^2-F^2}{F^2}\right)\,\ell(\rho {X})+\left(\frac{2 \mathfrak{B}-F}{F}\right)\, \mathbf{B}(\rho {X})\right) \\
&\quad + \left(\frac{\mathfrak{B}^2-\mathfrak{B} F-F^2}{F}\right)\, G \cdot\left(\left(\frac{-\mathfrak{B}^2-F^2}{F^2}\right)\,\ell(\rho {X})+\left(\frac{2 \mathfrak{B}-F}{F}\right)\, \mathbf{B}(\rho {X})\right) \\
&\quad - 2\left(\frac{\mathfrak{B}^2-\mathfrak{B} F-F^2}{F}\right)\,\left(\left(\frac{-\mathfrak{B}^2-F^2}{F^2}\right)\,dF(X)+\left(\frac{2 \mathfrak{B}-F}{F}\right)\, d\mathfrak{B}(X)\right) \\
&\quad - \left(\frac{\mathfrak{B}^2-\mathfrak{B} F-F^2}{F}\right)\,\left(\left(\frac{-\mathfrak{B}^2-F^2}{F^2}\right)\,\ell(\rho[G,X])+\left(\frac{2 \mathfrak{B}-F}{F}\right)\, \mathbf{B}(\rho[G,X])\right) \\
&= \left(\frac{4 \mathfrak{B}^3-3 \mathfrak{B}^2 F-F^3}{F}\right) \ell(\rho {X}) - \left(6 \mathfrak{B}^2-6 \mathfrak{B} F-F^2\right)\, \mathbf{B}(\rho {X}) \\
&\quad + \left(\frac{\mathfrak{B}^4-\mathfrak{B}^3 F-\mathfrak{B} F^3-F^4}{F^3}\right) dF(X) - \left(\frac{2 \mathfrak{B}^3-3 \mathfrak{B}^2 F-\mathfrak{B} F^2+F^3}{F^2}\right) d\mathfrak{B}(X).
\end{align*}}}
On the other hand,  we have {\small{
\begin{align*}
\frac{1}{2}\,i_{\gamma \overline{V}}\,dd_{J}\widetilde{F}^{2}(X) &= \widetilde{g}(\overline{V},\rho X) \\
&= \frac{\left(\mathfrak{B}^2+F^2\right) \left(-\mathfrak{B}^2+\mathfrak{B} F+F^2\right)}{F^4}\,g(\overline{V},\rho {X}) + \frac{6 \mathfrak{B}^2-6 \mathfrak{B} F-F^2}{F^2}\,\mathbf{B}(\overline{V})\,\mathbf{B}(\rho {X}) \\
&\quad + \frac{4 \mathfrak{B}^4-3 \mathfrak{B}^3 F-\mathfrak{B} F^3}{F^4}\,\ell(\overline{V})\,\ell(\rho {X}) \\
&\quad + \frac{F^3-4 \mathfrak{B}^3+3 \mathfrak{B}^2 F}{F^3}\,\set{\mathbf{B}(\overline{V})\,\ell(\rho {X})+\mathbf{B}(\rho {X})\,\ell(\overline{V})}.
\end{align*}}}
Substituting the last two relations into Equation \eqref{ch5eq} yields
\begin{align*}
&\left(\frac{\mathfrak{B}^4-\mathfrak{B}^3 F-\mathfrak{B} F^3-F^4}{F^3}\right) dF(X) - \left(\frac{2 \mathfrak{B}^3-3 \mathfrak{B}^2 F-\mathfrak{B} F^2+F^3}{F^2}\right) d\mathfrak{B}(X) \\
&= \left(\frac{4 \mathfrak{B}^3-3 \mathfrak{B}^2 F-F^3}{F}\right) \ell(\rho {X}) - \left(6 \mathfrak{B}^2-6 \mathfrak{B} F-F^2\right)\, \mathbf{B}(\rho {X}) \\
&\quad + \left(\frac{\mathfrak{B}^4-\mathfrak{B}^3 F-\mathfrak{B} F^3-F^4}{F^3}\right) dF(X) - \left(\frac{2 \mathfrak{B}^3-3 \mathfrak{B}^2 F-\mathfrak{B} F^2+F^3}{F^2}\right) d\mathfrak{B}(X) \\
&\quad + \frac{\left(\mathfrak{B}^2+F^2\right) \left(-\mathfrak{B}^2+\mathfrak{B} F+F^2\right)}{F^4}\,g(\overline{V},\rho {X}) + \frac{6 \mathfrak{B}^2-6 \mathfrak{B} F-F^2}{F^2}\,\mathbf{B}(\overline{V})\,\mathbf{B}(\rho {X}) \\
&\quad + \frac{4 \mathfrak{B}^4-3 \mathfrak{B}^3 F-\mathfrak{B} F^3}{F^4}\,\ell(\overline{V})\,\ell(\rho {X}) \\
&\quad + \frac{F^3-4 \mathfrak{B}^3+3 \mathfrak{B}^2 F}{F^3}\,\set{\mathbf{B}(\overline{V})\,\ell(\rho {X})+\mathbf{B}(\rho {X})\,\ell(\overline{V})}.
\end{align*}
Since the Finsler metric tensor $g$ is non-degenerate, the above relation reduces to
\begin{align}\label{ch52.eq.5}
&\frac{\left(\mathfrak{B}^2+F^2\right) \left(-\mathfrak{B}^2+\mathfrak{B} F+F^2\right)}{F^4}\,\overline{V} \nonumber \\
&= \Bigg{\{}\frac{-4 \mathfrak{B}^3+3 \mathfrak{B}^2 F+F^3}{F^2} + \frac{4 \mathfrak{B}^3-3 \mathfrak{B}^2 F-F^3}{F^4} \,\mathbf{B}({\overline{V}}) - \frac{4 \mathfrak{B}^4-3 \mathfrak{B}^3 F-\mathfrak{B} F^3}{F^5}\,\ell(\overline{V})\Bigg{\}}\overline{\eta} \nonumber \\
&\quad + \Bigg{\{}6 \mathfrak{B}^2-6 \mathfrak{B} F-F^2 - \frac{6 \mathfrak{B}^2-6 \mathfrak{B} F-F^2}{F^2}\,\mathbf{B}(\overline{V}) + \frac{4 \mathfrak{B}^3-3 \mathfrak{B}^2 F-F^3}{F^3}\,\ell(\overline{V})\Bigg{\}}\overline{A},
\end{align}
where $\ell(\overline{V})$ and $\mathbf{B}(\overline{V})$ are geometric quantities given by the following system:
\begin{equation}\label{mm}
\begin{aligned}
    a_1 \,\ell(\overline{V})+b_1 \,\mathbf{B}(\overline{V}) &= c_1, \\
    a_2 \,\ell(\overline{V})+b_2 \,\mathbf{B}(\overline{V}) &= c_2,
\end{aligned}
\end{equation}
with coefficients determined by
\begin{align*}
a_1 &:= \frac{-\mathfrak{B}^4+\mathfrak{B}^3 F+\mathfrak{B} F^3+F^4}{F^4}, \quad b_1 := \frac{2 \mathfrak{B}^3-3 \mathfrak{B}^2 F-\mathfrak{B} F^2+F^3}{F^3}, \quad c_1 := F^2 \, b_1, \\
a_2 &:= \frac{\left(-4 \mathfrak{B}^3+3 \mathfrak{B}^2 F+F^3\right) \left(F^2 A^2-\mathfrak{B}^2\right)}{F^5}, \\
b_2 &:= \frac{-5 \mathfrak{B}^4+4 \mathfrak{B}^3 F+6 \mathfrak{B}^2 F^2 A^2+2 \mathfrak{B} F^3 \left(1-3 A^2\right)-F^4 \left(A^2-1\right)}{F^4}, \\
c_2 &:= \frac{-4 \mathfrak{B}^4+3 \mathfrak{B}^3 F+6 \mathfrak{B}^2 F^2 A^2-6 \mathfrak{B} F^3 A^2+\mathfrak{B} F^3-F^4 A^2}{F^2}.
\end{align*}
Using condition \eqref{Eq:g_tilde_non_degenerate}, the system \eqref{mm} yields the unique solution:
\begin{align*}
\ell(\overline{V}) &= \frac{F^3 (2 \mathfrak{B}-F) \left(\mathfrak{B}^2+F^2\right)}{\left(\mathfrak{B}^2-\mathfrak{B} F-F^2\right) \left(3 \mathfrak{B}^2+F^2 \left(1-2 A^2\right)\right)}, \\
\mathbf{B}(\overline{V}) &= \frac{F^2 \left(4 \mathfrak{B}^4-3 \mathfrak{B}^3 F-2 \mathfrak{B}^2 F^2 A^2+\mathfrak{B} F^3 \left(2 A^2-1\right)+2 F^4 A^2\right)}{\left(\mathfrak{B}^2-\mathfrak{B} F-F^2\right) \left(3 \mathfrak{B}^2+F^2 \left(1-2 A^2\right)\right)}.
\end{align*}

Consequently, the canonical sprays $G$ and $\widetilde{G}$ are related by:
$$
\widetilde{G} = G-\frac{F^2 \left(-4 \mathfrak{B}^3+3 \mathfrak{B}^2 F+F^3\right)}{\left(\mathfrak{B}^2-\mathfrak{B} F-F^2\right) \left(3 \mathfrak{B}^2+F^2 \left(1-2 A^2\right)\right)} \,{\mathcal{C}} -\frac{2 F^4}{3 \mathfrak{B}^2+F^2 \left(1-2 A^2\right)}\,\gamma \overline{A}.
$$
This completes the proof.
\end{proof}

\begin{thm}\label{projectively related sprays}
Let $(M,F)$ be a Finsler manifold endowed with a concurrent $\pi$-vector field $\overline{A}$. Under the special golden Finsler metric change \eqref{change}, the Finsler metrics $F$ and $\widetilde{F}$ do not share the same set of geodesics. (That is, the unparameterized geodesics of the base metric $F$ and the generalized golden metric $\widetilde{F}$ are strictly distinct.)
\end{thm}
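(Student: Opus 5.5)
The argument runs through Theorem \ref{th.22}, using the standard characterization of projective relatedness in the Klein--Grifone formalism. Two sprays $G$ and $\widetilde{G}$ on $M$ have the same unparameterized geodesics if and only if $\widetilde{G} = G + \mathcal{P}\,\mathcal{C}$ for some smooth function $\mathcal{P}$ on $\T M$, positively homogeneous of degree one. Equivalently, the vertical difference $\widetilde{G}-G=\gamma\overline{V}$ must satisfy $\overline{V} = \mathcal{P}\,\overline{\eta}$.

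By Theorem \ref{th.22}, $\overline{V} = -\mathcal{E}_1\,\overline{\eta} - \mathcal{E}_2\,\overline{A}$. Suppose, for contradiction, that $F$ and $\widetilde{F}$ are projectively related. Then
\[
\mathcal{E}_2\,\overline{A} = -(\mathcal{P}+\mathcal{E}_1)\,\overline{\eta}.
\]
On the domain where $\widetilde{F}$ is a (conic) Finsler metric we have $\mathcal{Z}\neq 0$ by condition \eqref{Eq:g_tilde_non_degenerate} and $F>0$. Hence $\mathcal{E}_2 = 2F^4/\mathcal{Z}$ never vanishes, and therefore
\[
\overline{A} = \lambda\,\overline{\eta}, \qquad \lambda := -(\mathcal{P}+\mathcal{E}_1)/\mathcal{E}_2,
\]
at every point of an open conic set $\mathcal{U}\subset\T M$.

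Next, $\overline{A}$ has to be ruled out as proportional to $\overline{\eta}$. Apply the vertical Berwald derivative: by \eqref{concurrent}, $D^{\circ}_{\gamma\overline{Z}}\overline{A}=0$, while $D^{\circ}_{\gamma\overline{Z}}\overline{\eta}=\overline{Z}$. This gives
\[
0 = (D^{\circ}_{\gamma\overline{Z}}\lambda)\,\overline{\eta} + \lambda\,\overline{Z}
\]
for all $\overline{Z}$. Choose $\overline{Z}$ not collinear with $\overline{\eta}$, which is possible since $n\ge 2$. Then $\lambda = 0$, so $\overline{A}=0$ on $\mathcal{U}$. This contradicts the assumption that the concurrent $\pi$-vector field is non-vanishing. (Alternatively, $\overline{A}=0$ contradicts $D^{\circ}_{\beta\overline{Z}}\overline{A}=-\overline{Z}\neq 0$.)

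A fiberwise version of the same argument also works. Since $\overline{A}$ is independent of $y$ while $\overline{\eta}(x,y)=y$ ranges over an open cone in $T_xM$, $\overline{A}_x$ cannot be proportional to every $y$ in that cone.

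The main point needing care is the set on which the argument runs. The conclusion must hold on the open conic domain where $\widetilde{F}>0$ and \eqref{Eq:g_tilde_non_degenerate} holds, so that $\mathcal{E}_1$ and $\mathcal{E}_2$ are defined and $\mathcal{E}_2\neq0$. We must also confirm that this domain contains, in each fiber, directions $y$ with $\overline{Z}$ transverse to $y$, which is automatic for an open cone when $n\ge2$. No cancellation between the $\mathcal{C}$-term and the $\gamma\overline{A}$-term can occur, because the coefficient of $\gamma\overline{A}$ is nowhere zero. So the obstruction is exactly the linear independence of $\overline{A}$ and $\overline{\eta}$.
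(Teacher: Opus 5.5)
Your proof is correct and follows the same route as the paper: you invoke Theorem \ref{th.22} and use that the coefficient $\mathcal{E}_2 = 2F^4/\mathcal{Z}$ of $\gamma\overline{A}$ never vanishes. It is more complete than the paper's argument. The paper passes directly from ``$\overline{A}$ is non-vanishing'' to ``the coefficient of $\gamma\overline{A}$ must vanish'', which silently assumes that $\overline{A}$ and $\overline{\eta}$ are linearly independent; your comparison of $D^{\circ}_{\gamma\overline{Z}}\overline{A}=0$ with $D^{\circ}_{\gamma\overline{Z}}\overline{\eta}=\overline{Z}$, together with your remark that $n\ge 2$ is needed, supplies exactly that missing justification.
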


\begin{proof} 
We prove this by showing that the geodesic sprays $G$ and $\widetilde{G}$ of $F$ and $\widetilde{F}$ can never be projectively related.

From Theorem \ref{th.22}, the canonical sprays $\widetilde{G}$ and $G$ are related by:
$$
\widetilde{G} = G - \mathcal{E}_1 \mathcal{C} - \mathcal{E}_2 \gamma \overline{A}
$$

By definition, if $\widetilde{G}$ and $G$ were projectively related, their difference would be proportional solely to the Liouville vector field $\mathcal{C}$. Since the vector field $\overline{A}$ is non-vanishing, this projective equivalence strictly requires the coefficient of $\overline{A}$ to vanish. That is:
$$
\mathcal{E}_2 = \frac{2 F^4}{3 \mathfrak{B}^2+F^2 \left(1-2 A^2\right)} = 0
$$

However, this condition implies $F=0$, which is impossible since $F$ is a strictly positive Finsler metric. Therefore, the sprays are never projectively related, concluding the proof.
\end{proof}

\begin{thm}\label{connection} 
Let $(M,F)$ be a Finsler manifold endowed with a concurrent $\pi$-vector field $\overline{A}$. Under the special golden Finsler metric change {\em \eqref{change}}, we have: 
\begin{description}
\item[(1)] The Barthel connections $\widetilde{\Gamma}$ and $\Gamma$ of $F$ and $\widetilde{F}$ are related by
\begin{equation}\label{nonlinear connection}
    \widetilde{\Gamma} = \Gamma + \mathbb{E}, \qquad \mathbb{E} := - \mathcal{E}_1\,J-d_J \mathcal{E}_1 \otimes \gamma \overline{\eta} - d_J \mathcal{E}_2 \otimes \gamma \overline{A}.
\end{equation}

\item[(2)] The horizontal map $\widetilde{\beta}$ associated with $\widetilde{F}$ has the form
$$
\widetilde{\beta}\,\overline{X} = {\beta}\overline{X} - {\frac 1 2}\left\{\mathcal{E}_1\,\gamma \overline{X}+d_J \mathcal{E}_1(\beta\overline{X}) \, \gamma \overline{\eta} - d_J \mathcal{E}_2 (\beta\overline{X})\, \gamma \overline{A}\right\}.
$$

\item[(3)] The horizontal projections $\widetilde{h}, h$ and vertical projections $\widetilde{v}, v$ of $F$ and $\widetilde{F}$ are related, respectively, by
$$
\widetilde{h} = h+\frac{1}{2} \mathbb{E}, \quad \widetilde{v} = v-\frac{1}{2}\mathbb{E}.
$$
 
\item[(4)] The Barthel curvature tensors $\widetilde{\Re}$ and $\Re$ of $F$ and $\widetilde{F}$ are determined by
\[ 
\widetilde{\Re} = \Re -[h,\mathbb{E}] -\frac{1}{2} [\mathbb{E},\mathbb{E}].
\]

\item[(5)] The horizontal counterpart of the Berwald connection associated with $\widetilde{F}$ has the form
\begin{align*}
{{\widetilde{D}}}^{\circ}_{ \widetilde{\beta}\, \overline{X}} \, {\overline{Y}} &= {D^\circ}_{{\beta} \overline{X}} \overline{Y} - \frac{1}{2}\left\{ \mathcal{E}_1\,D^\circ_{\gamma \overline{X}}\,\overline{Y}+d_J \mathcal{E}_1 ({\beta} \overline{X})\, D^\circ_{\gamma \overline{\eta}}\, \overline{Y}\right. \\
&\quad \left. - d_J \mathcal{E}_1 ({\beta} \overline{X})\, \overline{Y}- d_J \mathcal{E}_1 (\beta \overline{Y})\,\overline{X} - d_J \mathcal{E}_2 ({\beta} \overline{X})\,D^\circ_{\gamma \overline{A}}\, \overline{Y} \right\} \\
&\quad + \frac{1}{2}\left\{dd_J \mathcal{E}_1 (\gamma \overline{Y},{\beta} \overline{X})\, \overline{\eta} - dd_J \mathcal{E}_2 (\gamma \overline{Y}, {\beta} \overline{X}) \, \overline{A}\right\}.
\end{align*}

\item[(6)] In general, the $\pi$-vector field $\overline{A}$ does not preserve its concurrency with respect to the generalized Finsler metric $\widetilde{F}$. However, a necessary and sufficient condition for $\overline{A}$ to be concurrent with respect to $\widetilde{F}$ is that:
\begin{equation}\label{preserved concurrent}
dd_J \mathcal{E}_1 (\gamma \overline{A},{\beta} \overline{X})\, \overline{\eta} - d_J \mathcal{E}_1 (\beta \overline{A}) \overline{X} + d_J \mathcal{E}_1 (\beta \overline{X})\overline{A} - dd_J \mathcal{E}_2 (\gamma \overline{A}, {\beta} \overline{X}) \overline{A} = 0.
\end{equation}
\end{description}
\end{thm}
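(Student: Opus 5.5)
The plan is to derive everything from Theorem \ref{th.22}, which expresses the canonical spray of $\widetilde{F}$ as $\widetilde{G} = G - \mathcal{E}_1\,\mathcal{C} - \mathcal{E}_2\,\gamma\overline{A}$. I will then apply the Klein--Grifone formalism step by step. Concretely, I will use $\Gamma = [J,G]$, the identities $h=\frac12(Id+\Gamma)$, $v=\frac12(Id-\Gamma)$ and $\mathfrak{R}=-\frac12[h,h]$, and the characterization of the horizontal map $\beta$ as $h\circ$(any lift), i.e.\ $\rho\circ\beta = Id$ with image in $\ker v$. Throughout, the Frölicher--Nijenhuis bracket will be expanded by its standard derivation rules.

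For (1), I compute $\widetilde{\Gamma} = [J,\widetilde{G}] = \Gamma - [J,\mathcal{E}_1\mathcal{C}] - [J,\mathcal{E}_2\gamma\overline{A}]$. I will use $[J,fY] = f[J,Y] + d_Jf\otimes Y$ for a vector field $Y$ (up to the sign conventions of Grifone), together with $[J,\mathcal{C}]=J$. The remaining term $[J,\gamma\overline{A}]$ vanishes because $\overline{A}$ is concurrent, hence independent of $y$, so $\gamma\overline{A}$ is a vertical lift of a vector field on $M$. Collecting terms gives $\mathbb{E}$. Part (3) then follows immediately from the definitions of $h$ and $v$.

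For (2), I note that $\widetilde{\beta}\overline{X} = \widetilde{h}(\beta\overline{X})$. Since $\rho\circ\widetilde h=\rho$, this is a lift of $\overline{X}$ that is $\widetilde{v}$-free. Evaluating $\frac12\mathbb{E}(\beta\overline{X})$ with $J\beta\overline{X}=\gamma\overline{X}$ gives the stated formula.

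For (4), I expand
\[
\widetilde{\mathfrak{R}}=-\tfrac12[h+\tfrac12\mathbb{E},\,h+\tfrac12\mathbb{E}]
= \mathfrak{R}-\tfrac12[h,\mathbb{E}]-\tfrac18[\mathbb{E},\mathbb{E}],
\]
using that the bracket is graded symmetric on vector $1$-forms. The paper's normalization of the constants will need to be reconciled with this, and I will check that convention carefully.

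For (5), I use that the horizontal Berwald connection is given by $D^\circ_{\beta\overline{X}}\overline{Y}=\rho[\beta\overline{X},\gamma\overline{Y}]$. I then substitute $\widetilde{\beta}$ from (2), expand the bracket with $[fY,Z]=f[Y,Z]-(Zf)Y$, and identify vertical brackets via $D^\circ_{\gamma\overline{X}}\overline{Y}$, which by Lemma \ref{B}(i) is unchanged. Terms such as $\gamma\overline{Y}\cdot d_J\mathcal{E}_1(\beta\overline{X})$ are rewritten as $dd_J\mathcal{E}_1(\gamma\overline{Y},\beta\overline{X})$ plus correction terms, and the pieces $[\gamma\overline{\eta},\gamma\overline{Y}]$ produce the $-\overline{Y}$ terms. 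For (6), I set $\overline{Y}=\overline{A}$ in (5) and use (\ref{concurrent}): $D^\circ_{\beta\overline{X}}\overline{A}=-\overline{X}$ and $D^\circ_{\gamma\cdot}\overline{A}=0$. Requiring $\widetilde{D}^\circ_{\widetilde{\beta}\overline{X}}\overline{A}=-\overline{X}$ leaves exactly the condition (\ref{preserved concurrent}). The vertical part of concurrency is automatic by Lemma \ref{B}(i).

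The main obstacle is the bookkeeping in (5): the brackets $[\gamma\overline{A},\gamma\overline{Y}]$ and $[\gamma\overline{\eta},\gamma\overline{Y}]$, and the conversion of directional derivatives of $d_J\mathcal{E}_i$ into $dd_J\mathcal{E}_i$. I would handle these by working in the splitting $X=\beta\rho X+\gamma KX$ and verifying each term separately.
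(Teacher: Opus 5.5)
Your treatment of (1) and (3) matches the paper's argument. The plan for (5), and hence for (6), rests on a wrong formula and misses the mechanism that produces the result. The horizontal Berwald derivative is not $\rho[\beta\overline{X},\gamma\overline{Y}]$. Only the horizontal part of that bracket survives $\rho$, and in fact $\rho[\beta\overline{X},\gamma\overline{Y}]=-D^\circ_{\gamma\overline{Y}}\overline{X}$. The correct formula is $D^\circ_{\beta\overline{X}}\overline{Y}=K[\beta\overline{X},\gamma\overline{Y}]$, i.e.\ $\gamma D^\circ_{\beta\overline{X}}\overline{Y}=v[\beta\overline{X},\gamma\overline{Y}]$.

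With your recipe the computation collapses. The difference $\widetilde{\beta}\overline{X}-\beta\overline{X}=\frac12\mathbb{E}(\beta\overline{X})$ is vertical, a bracket of two vertical fields is vertical, and $\rho$ annihilates it. You would therefore get $\widetilde{D}^\circ_{\widetilde{\beta}\overline{X}}\overline{Y}=D^\circ_{\beta\overline{X}}\overline{Y}$ with no $\mathcal{E}_i$-terms at all.

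The missing idea is that the vertical projector changes too: $\gamma\widetilde{D}^\circ_{\widetilde{\beta}\overline{X}}\overline{Y}=\widetilde{v}[\widetilde{\beta}\overline{X},\gamma\overline{Y}]$ with $\widetilde{v}=v-\frac12\mathbb{E}$ from (3). Since $\mathbb{E}$ kills vertical vectors, this contributes $-\frac12\mathbb{E}\big(h[\beta\overline{X},\gamma\overline{Y}]\big)=\frac12\mathbb{E}(\beta D^\circ_{\gamma\overline{Y}}\overline{X})$. That term is indispensable for two reasons:
\begin{itemize}
\item It cancels the stray term $\frac12\mathcal{E}_1\,\gamma D^\circ_{\gamma\overline{Y}}\overline{X}$ coming from $[\gamma\overline{X},\gamma\overline{Y}]$.
\item It supplies exactly the pieces $-\frac12 d_J\mathcal{E}_i(\beta D^\circ_{\gamma\overline{Y}}\overline{X})$ that turn $\gamma\overline{Y}\cdot d_J\mathcal{E}_i(\beta\overline{X})$ into $dd_J\mathcal{E}_i(\gamma\overline{Y},\beta\overline{X})$. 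Here you use $d_J\mathcal{E}_i(\gamma\overline{Y})=0$ and $J[\gamma\overline{Y},\beta\overline{X}]=\gamma D^\circ_{\gamma\overline{Y}}\overline{X}$.
\end{itemize}
These are the ``correction terms'' your plan has no place for. Even with $K$ in place of $\rho$, using the old $K$ rather than $\widetilde{K}=\gamma^{-1}\circ\widetilde{v}$ leaves them uncancelled.

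Do not expect the other items to match the statement verbatim either.
\begin{itemize}
\item In (4), your value $\Re-\frac12[h,\mathbb{E}]-\frac18[\mathbb{E},\mathbb{E}]$ is correct, and no convention reconciles it with the stated formula. The paper's proof expands $-\frac12[h+\mathbb{E},h+\mathbb{E}]$, i.e.\ it uses $\widetilde{h}=h+\mathbb{E}$, which contradicts (3).
\item In (2), evaluating $\frac12\mathbb{E}(\beta\overline{X})$ gives the term $-\frac12 d_J\mathcal{E}_2(\beta\overline{X})\,\gamma\overline{A}$, opposite in sign to the statement. So your assertion that it ``gives the stated formula'' is false. Consistently, the corrected computation of (5) yields both $\mathcal{E}_2$-terms with signs opposite to those printed, while its $\mathcal{E}_1$-terms agree with the statement.
\item In (6), setting $\overline{Y}=\overline{A}$ in (5), even in its printed form, produces $+d_J\mathcal{E}_1(\beta\overline{A})\,\overline{X}$, not the $-d_J\mathcal{E}_1(\beta\overline{A})\,\overline{X}$ that appears in the statement.
\end{itemize}
So once (5) is repaired, your route establishes corrected versions of (2), (4), (5) and (6) that are consistent with (1), not the statement as printed.
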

 
\begin{proof}
Under the special golden Finsler transformation $\widetilde{F}$ of a Finsler metric $F$ \eqref{change}: 
\begin{description}
\item[(1)] It follows from 
\begin{align*}
\widetilde{\Gamma} &:= [J,\widetilde{G}] = [J, G - \mathcal{E}_1 \,{\mathcal{C}} - \mathcal{E}_2 \,\gamma \overline{A}] \\
&= [J,G] - \mathcal{E}_1[J,\mathcal{C}] - d\mathcal{E}_1\wedge i_{\mathcal{C}}\,J + d_{J}\mathcal{E}_1 \otimes \mathcal{C} - \mathcal{E}_2[J,\gamma \overline{A}] \\
& \qquad - d\mathcal{E}_2\wedge i_{\gamma\overline{A}}\,J + d_{J}\mathcal{E}_2 \otimes \gamma \overline{A} \\
&= \Gamma - \mathcal{E}_1\,J - d_J \mathcal{E}_1 \otimes \gamma \overline{\eta} - d_J \mathcal{E}_2 \otimes \gamma \overline{A}.   
\end{align*}

\item[(2)] This is a direct consequence of $\Gamma = 2\beta \circ \rho - I$.

\item[(3)] This follows from \eqref{nonlinear connection} together with the definitions of the vertical and horizontal projectors induced from the non-linear connection.

\item[(4)] This follows from the definition of the curvature of the Barthel connection $\widetilde{\Re} := -\frac{1}{2}[\widetilde{h},\widetilde{h}]$ together with the definition of the Fr\"{o}licher-Nijenhuis bracket and its properties (see, e.g., \cite{r20}). That is, 
\begin{align*}
\widetilde{\Re} &= -\frac{1}{2} [h+\mathbb{E},h+\mathbb{E}] \\
&= -\frac{1}{2} \left([h,h] + [h,\mathbb{E}] + [\mathbb{E},h] + [\mathbb{E},\mathbb{E}]\right) \\
&= \Re - [h,\mathbb{E}] - \frac{1}{2} [\mathbb{E},\mathbb{E}]. 
\end{align*}

\item[(5)] This results from Lemma \ref{B}(i) along with the facts that 
$$
[JX,JY] = J[X,JY] + J[JX,Y], \quad vJ = J, \quad Jv = 0.
$$ 
For further details, we refer to \cite{Soleiman-Taha_Mat}.  

\item[(6)] Because the $\pi$-vector field $\overline{A}$ is concurrent with respect to the Finsler metric $F$, we have
$$
{{D}}^{\circ}_{\beta \overline{X}}\,\overline{A} = - \overline{X}, \quad \text{and} \quad {{D}}^{\circ}_{\gamma \overline{X}}\,\overline{A} = 0.
$$
Now, Lemma \ref{B}(i) gives ${{\widetilde{D}}}^{\circ}_{\gamma \overline{X}}\, \overline{Y} = {{D}}^{\circ}_{\gamma \overline{X}}\, \overline{Y}.$ Thus, ${{\widetilde{D}}}^{\circ}_{\gamma \overline{X}}\, \overline{A} = 0,$ but from the above item, 
$$
{{\widetilde{D}}}^{\circ}_{\widetilde{\beta}\, \overline{X}}\,\overline{A} \neq {{D}}^{\circ}_{\beta \overline{X}}\,\overline{A}.
$$ 
If condition \eqref{preserved concurrent} is satisfied, then ${{\widetilde{D}}}^{\circ}_{\widetilde{\beta}\, \overline{X}}\,\overline{A} = - \overline{X}$ (which means $\overline{A}$ is concurrent with respect to $\widetilde{F}$).
   
Conversely, assume that $\overline{A}$ is a concurrent $\pi$-vector field with respect to $\widetilde{F}$ (i.e., $\widetilde{D^\circ}_{\gamma\overline{X}} \, \overline{A} = 0$ and ${{\widetilde{D}}}^{\circ}_{\widetilde{\beta}\, \overline{X}}\,\overline{A} = -\overline{X}$). Hence, by   
\begin{align*}
\widetilde{D^\circ}_{\widetilde{\beta}\, \overline{X}} \, {\overline{A}} &= {D^\circ}_{\beta \overline{X}} \, {\overline{A}} + \frac{1}{2}\left\{ \mathcal{E}_1\,D^\circ_{\gamma \overline{X}}\,\overline{A}+d_J \mathcal{E}_1 ({\beta} \overline{X})\, D^\circ_{\gamma \overline{\eta}}\, \overline{A} - d_J \mathcal{E}_2 ({\beta} \overline{X})\,D^\circ_{\gamma \overline{A}}\, \overline{A}\right\},
\end{align*} 
we obtain 
\[
\mathcal{E}_1\,D^\circ_{\gamma \overline{X}}\,\overline{A}+d_J \mathcal{E}_1 ({\beta} \overline{X})\, D^\circ_{\gamma \overline{\eta}}\, \overline{A} - d_J \mathcal{E}_2 ({\beta} \overline{X})\,D^\circ_{\gamma \overline{A}}\, \overline{A} = 0. \qedhere
\]
\end{description}
\end{proof}
\section{Conclusion}
We have established a new class of Finsler metrics, that is,  golden $(\alpha, \beta)$-metrics  and  provided an  investigation that covers both the local coordinate based foundations and the global coordinate-free perspective of such metrics.  In the local study, we derived the components of the golden Finsler metric tensor $g_{ij}$, its inverse $g^{ij}$, and the geodesic spray coefficients $G^i$, which provide the precise equations of motion for paths within this space.  Also,  we have proved golden $(\alpha, \beta)$-metrics   are not almost rational Finsler metrics. Only, Randers metric \cite{TT} and golden $(\alpha, \beta)$-metrics are the known examples of  non almost rational Finsler metrics. Furthermore, we have explored the projective properties of the space, successfully establishing the exact necessary and sufficient conditions under which the golden $(\alpha, \beta)$-metric is projectively flat.
\par In the intrinsic investigation of the golden Finsler transformation. We have defined the golden metric $\widetilde{F}$ via a scalar $\pi$-form. Then, we have derived its fundamental metric tensors and precise non-degeneracy conditions. Restricting our study to the special golden Finsler metric, characterized by a concurrent $\pi$-vector field $\overline{A}$, we have established the relationship between the geodesic sprays, proving that the base metric $F$ and its golden counterpart $\widetilde{F}$ are strictly never projectively related. Furthermore, we explicitly determined how fundamental non-linear structures, including the Barthel and Berwald connections, transform under this golden change. Thus, the derivation of these global geometric objects not only enriches the theoretical landscape of Finsler geometry but also supplies the rigorous mathematical machinery required for physically modeling anisotropic spacetimes and dynamical phenomena.
\par
While this is a pure mathematics paper, the geometric frameworks we have investigated are highly relevant to theoretical physics, specially modified gravity theories, such as giving models of anisotropic spacetimes. Our concurrent vector field could be mathematically mirror of cosmic expansion, while the derived non-degeneracy condition establishes the exact boundaries where models of Lorentz-violation would theoretically break down. Therefore, our results provide a rigorous structural foundation for future applications of golden Finsler manifolds in both differential geometry and theoretical physics.
\bigskip

\noindent\textbf{Declarations}
\begin{itemize}
\item {\textbf{Generative AI and AI-assisted technologies in the manuscript preparation process}}: During the preparation of this work the authors used Google Gemini in order to refine the language and improve the structural flow of the text. After using this tool, the authors reviewed and edited the content as needed and take full responsibility for the content of the published article. 
\item \textbf{Competing interests}: The authors declare no conflict of interest.
  \item \textbf{Availability of data and material}: Not applicable.
  \item \textbf{Funding}: Not applicable.
  \item \textbf{Contributions of authors}: The authors have made substantive contributions
to the article and assume full responsibility for its content. The authors read and approved the final manuscript.
\end{itemize}

\providecommand{\bysame}{\leavevmode\hbox
to3em{\hrulefill}\thinspace}
\providecommand{\MR}{\relax\ifhmode\unskip\space\fi MR }
\providecommand{\MRhref}[2]{%
  \href{http://www.ams.org/mathscinet-getitem?mr=#1}{#2}
} \providecommand{\href}[2]{#2}


\begin{thebibliography}{21}

\bibitem{Erasmo} E. Caponio, M. A. Javaloyes and M.  Sánchez, 
\emph{Wind Finslerian structures: from Zermelo’s navigation to the causality of spacetimes},  Mem. Amer. Math. Soc. 300 (1501), 134 (2024).

\bibitem{Bao}
D. Bao, \emph{On  two curvature-driven problems in Riemann-Finsler geometry}, Advanced Studies in Pure Mathematics 48, 2007
Finsler Geometry, Sapporo 2005 - In Memory of Makoto Matsumoto,  19--71.


\bibitem{[13]}
S.-S. Chern and Z. Shen, \emph{Riemann-Finsler geometry}, Nankai Tracts in Mathematics, 6, World Scientific, Hackensack, 2005. 

\bibitem{Projectively Flat Special} B.C.  Chethana and S.K. Narasimhamurthy, \emph{Locally Projectively Flat Special $(\alpha ,\beta)$-metric}, Palest.  J.  Math.  10\textbf{ (I)} (2021) , 69–74.

\bibitem{gold ratio1}
M. Crasmareanu and C.-E. Hre\c{t}canu, 
\emph{Golden differential geometry}, Chaos, Solitons \& Fractals, \textbf{38}(5), (2008), 1229-1238.

\bibitem{r20}
A.~Fr\"{o}licher and A.~Nijenhuis, \emph{Theory of vector-valued
differential forms}, {I}, Ann. Proc. Kon. Ned. Akad., A,
\textbf{{59}} (1956), 338--359.


 \bibitem{gold ratio2}
A. Gezer, N. Cengiz and A. Salimov, \emph{On integrability of Golden Riemannian structures}, Turkish J.  Math. \textbf{37} (4), (2013) 693-703.

\bibitem{r21}
J.~Grifone, \emph{Structure pr\'esque-tangente et connexions,
\textsc{I}}, Ann.
  Inst. Fourier, Grenoble,\textbf{ 22}, 1 (1972), 287-334.

\bibitem{r22}
J.~Grifone, \emph{Structure presque-tangente et connexions,
\textsc{II}}, Ann. Inst. Fourier, Grenoble, \textbf{22}, 3 (1972), 291-338.

\bibitem{gold ratio3}
C. Hretcanu  and M. Cra\c{s}mareanu, \emph{Applications of the Golden Ratio on Riemannian Manifolds}, Turkish Journal of Mathematics \textbf{33} (2), (2009)179-191. 

\bibitem{Finsler definitions} M. Javaloyes and M. Sánchez, \textit{On the definition and examples of Finsler metrics},   Ann. Sc. Norm.
Super. Pisa Cl. Sci. (5) Vol. XIII (2014), 813-858.  

\bibitem{r27}
J.~Klein and A.~Voutier, \emph{Formes ext\'{e}rieures
g\'{e}n\'{e}ratrices de
  sprays}, Ann. Inst. Fourier, Grenoble, \textbf{18}, 1 (1968), 241-260.
  
 \bibitem{Lovas} R.L. Lovas, \emph{A note on Finsler-Minkowski norms}, Houston J.  Math.  \textbf{33} (2007), 701-707.
 
 \bibitem{MM}
M. Matsumoto, \emph{On C-reducible Finsler spaces}, Tensor, N. S. \textbf{24} (1972), 29-37.

\bibitem{[7]}
M. Matsumoto, \emph{Theory of Finsler spaces with $(\alpha,\beta)$-metrics}, Rep. Math. Phys., \textbf{31} (1991), 43-47.

\bibitem{Anastaise} R. Miron and M. Anastaise,  \emph{Vector bundles and Lagrange spaces with applications to Relativity}, 1997.

\bibitem{r93}
R.~Miron and M.~Anastasiei, \emph{The geometry of Lagrange spaces:
Theory and applications}, Kluwer Acad. Publ.,  \textbf{59}, 1994.

\bibitem{shenGCY}
Z. Shen and G. Civi Yildirim, \emph{On a class of projectively flat metrics with constant flag curvature}, Canad. J. Math. \textbf{60} (2), (2008) 443–456.



\bibitem{amr3} A. Soleiman, \emph{Recurrent Finsler manifolds under projective change}, Int. J. Geom. Meth. Mod. Phys.,  \textbf{13} (2016).



\bibitem{Szilasi-book} J. Szilasi, R.L. Lovas and D.Cs. Kert\'esz,
  \emph{Connections, sprays and Finsler structures}, World Scientific, 2014.

\bibitem{Tachibana}
S. Tachibana, \emph{On Finsler spaces which admit concurrent vector field},  Tensor, N. S.,  \textbf{1} (1950), 1--5.

\bibitem{Soleiman-Taha_Mat}
A.~Soleiman and E. H. Taha, \emph{On a generalized Matsumoto metric with special $\pi$-form}, Hacet. J. Math. Stat. (Advanced Online Publication 2026). doi:10.15672/hujms.1613505

\bibitem{TT} E. H. Taha and B. Tiwari, \emph{On almost rational Finsler metrics},  Bull.  Iran.  Math.  Soc. \textbf{49} (8) (2023).  

\bibitem{Tiwari-Kumar-Tayebi} B. Tiwari, M. Kumar and A. Tayebi,  \emph{On Generalized Kropina change of generalized m-th root Finsler metrics.}, Proc. Natl. Acad. Sci., India, Sect. A Phys. Sci.  \textbf{91},  443–450 (2021).

\bibitem{r92}
N.~L. Youssef, S.~H. Abed and A.~Soleiman, \emph{Cartan and
  Berwald connections in the pullback formalism}, Algebras,
  Groups and Geometries, \textbf{25} (2008), 363-384. 


\bibitem{r94a}
N.~L. Youssef, S.~H. Abed and A.~Soleiman, \emph{Concurrent $\pi$-vector fields and energy $\beta$-change}, Int.  J.  Geom.  Meth.  Mod.  Phys.  \textbf{60} (2011), 1003–1031.

\bibitem{r86}
N.~L. Youssef, S.~H. Abed and A.~Soleiman, \emph{A global approach to the theory of special Finsler manifolds}, J.  Math.  Kyoto Univ. \textbf{48} (2008), 857-893.

\bibitem{r94}
N.~L. Youssef, S.~H. Abed and A.~Soleiman, \emph{A global approach to  the theory  of connections in Finsler geometry}, Tensor, N.  S. \textbf{71} (2009),
187-208. 

\bibitem{r96}
N.~L. Youssef, S.~H. Abed and A.~Soleiman, \emph{Geometric objects associated with the fundamental connections in Finsler geometry},   J.  Egypt.  Math.  Soc. \textbf{18} (2010), 67-90.  



 

\end{thebibliography}
\end{document}